\DeclareMathOperator{\Tr}{Tr}
\newtheorem{thm}{Theorem}[section]
\newtheorem{cor}[thm]{Corollary}
\newtheorem{rmrk}[thm]{Remark}
\newtheorem{lem}[thm]{Lemma}
\newtheorem{prop}[thm]{Proposition}
\newtheorem{defin}[thm]{Definition}
\newtheorem{expl}[thm]{Example}
\newtheorem{prob_s}{Problem}
\newtheorem{conj_s}[prob_s]{Conjecture}
\begin{document}
\title[Signless Laplacian eigenvector sign patterns]{On the sign patterns of the smallest signless Laplacian eigenvector}
\date{July 29, 2013}

\author{Felix Goldberg}
\address{Hamilton Institute, National University of Ireland Maynooth, Ireland}
\email{felix.goldberg@gmail.com}

\author{Steve Kirkland}
\address{Hamilton Institute, National University of Ireland Maynooth, Ireland} 
\email{stephen.kirkland@nuim.ie}

\begin{abstract}
Let $H$ be a connected bipartite graph, whose signless Laplacian matrix is $Q(H)$. Suppose that the bipartition of $H$ is $(S,T)$ and that $x$ is the eigenvector of the smallest eigenvalue of $Q(H)$. It is well-known that $x$ is positive and constant on $S$, and negative and constant on $T$.

The resilience of the sign pattern of $x$ under addition of edges into the subgraph induced by either $S$ or $T$ is investigated and a number of cases in which the sign pattern of $x$ persists are described.
\end{abstract}


\subjclass[2010]{05C50,15A18,15B48}
\keywords{signless Laplacian matrix, eigenvalue, eigenvector, sign pattern, bipartite graph, independent set, $S$-Roth graph}

\thanks{{S.K.'s Research supported in part by the Science Foundation Ireland under Grant No.
SFI/07/SK/I1216b.}}

\maketitle

\section{Introduction}

Let $G$ be a graph with adjacency matrix $A(G)$ and let $D(G)$ be the diagonal matrix of the vertex degrees of $G$. The Laplacian matrix of $G$ is $L(G)=D(G)-A(G)$ and the \emph{signless Laplacian matrix} of $G$ is $Q(G)=D(G)+A(G)$. The matrix $Q(G)$ has been largely subject to benign neglect up to very recent times, but it has received a lot of attention lately - some of the results of which are summarized in the surveys \cite{CveRowSim07,Signless1,Signless2,Signless3}.

It is well-known (cf. \cite[pp. 157-8]{CveRowSim07}) that $0$ is an eigenvalue of $Q(G)$ with multiplicity equal to the number of bipartite connected components of $G$.
We shall denote the smallest eigenvalue of $Q(G)$ by $\mu(G)$.

In this note we consider the relationship between bipartiteness and the signless Laplacian from a slightly different angle, studying the sign pattern of an eigenvector that corresponds to $\mu(G)$. Note also that we will restrict our attention to \emph{real} eigenvectors.


A few words about notation: if $V(G)$ is labelled as $\{1,2,\ldots,n\}$ and $x \in \mathbb{R}^{n}$, then for any nonempty subset $S \subseteq V(G)$ we shall mean by $x(S)$ the vector in $\mathbb{R}^{|S|}$ formed by deleting from $x$ all entries not corresponding to elements of $S$. The all-ones vector of length $n$ will be denoted by $\mathbf{1}_{n}$ or just $\mathbf{1}$ if the length is clear from the context. We write $v>0$ to indicate that all the entries of a vector $v$ are strictly positive.

The following fact is well-known:
\begin{prop}\label{prop:basic}
Let $H$ be a connected biparite graph with bipartition $(S,T)$. For any eigenvector $x$ corresponding to $\mu(H)$ there is a nonzero number $c$ so that:
$$
x(S)=c \mathbf{1}_{|S|} ,x(T)=-c \mathbf{1}_{|T|}.
$$
\end{prop}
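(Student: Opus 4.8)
The plan is to exploit the well-known sum-of-squares identity for the signless Laplacian. For any $x\in\mathbb{R}^{n}$ one has
\[
x^{\top}Q(H)x=\sum_{\{i,j\}\in E(H)}(x_i+x_j)^2,
\]
which is verified by expanding the right-hand side and collecting the coefficient of $x_i^2$ (it is $\deg(i)$, contributed by the degree term) and of $x_ix_j$ (it is $2$ precisely when $\{i,j\}\in E(H)$, matching the adjacency term). In particular $Q(H)$ is positive semidefinite, so every eigenvalue is nonnegative and hence $\mu(H)\ge 0$; since $0$ is an eigenvalue of $Q(H)$ (as recalled above, with multiplicity equal to the number of bipartite components, which is $1$ here), we conclude $\mu(H)=0$ and that the corresponding eigenspace is exactly $\ker Q(H)$, a one-dimensional space.

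Now let $x$ be an eigenvector for $\mu(H)=0$, so that $Q(H)x=0$ and therefore $x^{\top}Q(H)x=0$. By the identity above, $\sum_{\{i,j\}\in E(H)}(x_i+x_j)^2=0$, which forces $x_i=-x_j$ for every edge $\{i,j\}$ of $H$. To pin down the global pattern I would use connectivity together with bipartiteness: fix a vertex $u_0\in S$ and set $c:=x_{u_0}$. Given any vertex $v$, choose a walk $u_0=w_0,w_1,\dots,w_k=v$ in $H$, which exists since $H$ is connected; iterating $x_{w_{\ell}}=-x_{w_{\ell-1}}$ gives $x_v=(-1)^k c$. Because $H$ is bipartite with parts $S$ and $T$, the parity of $k$ depends only on the part containing $v$ (even for $v\in S$, odd for $v\in T$), so $x_v=c$ for all $v\in S$ and $x_v=-c$ for all $v\in T$; that is, $x(S)=c\mathbf{1}_{|S|}$ and $x(T)=-c\mathbf{1}_{|T|}$. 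Finally $c\ne 0$, since otherwise $x=0$, contradicting that $x$ is an eigenvector.

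The only point that needs care is the last step: one must check that the alternating-sign propagation along two different walks from $u_0$ to the same vertex yields the same value, and this is exactly where bipartiteness is essential — an odd closed walk would force $c=-c$, whereas all closed walks in a bipartite graph have even length. Everything else is a routine computation. (Alternatively, one may observe that conjugating $Q(H)$ by the diagonal $\pm1$ signature matrix that is $+1$ on $S$ and $-1$ on $T$ produces the ordinary Laplacian $L(H)$, whose kernel is spanned by $\mathbf{1}_n$ by connectedness; transporting this vector back through the conjugation gives the stated pattern.)
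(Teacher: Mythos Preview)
Your proof is correct. The paper itself does not supply a proof of this proposition: it is stated there as a ``well-known'' fact, with only a reference to \cite{CveRowSim07} for the multiplicity of the zero eigenvalue. Your argument via the quadratic-form identity $x^{\top}Q(H)x=\sum_{ij\in E(H)}(x_i+x_j)^2$, followed by edge-by-edge sign propagation along walks, is exactly the standard justification and is essentially what the paper invokes implicitly (and what later appears as Theorem~\ref{thm:cf}). The alternative you sketch---conjugating $Q(H)$ to $L(H)$ by the $\pm1$ signature matrix and then using that $\ker L(H)=\operatorname{span}\{\mathbf{1}\}$ for connected $H$---is equally standard and arguably cleaner. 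One minor remark: you cite the multiplicity fact to conclude $\dim\ker Q(H)=1$, but your own propagation argument already proves this directly, since any null vector is determined by the single scalar $c=x_{u_0}$.
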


The main result of the paper \cite{Roth89} by Roth can be seen as an interesting generalization of Proposition \ref{prop:basic}:
\begin{prop}\label{prop:roth89}
Let $H$ be a connected biparite graph with bipartition $(S,T)$. Let $D$ be any diagonal matrix and let $x$ be an eigenvector corresponding to the smallest eigenvalue of $Q(H)+D$. Then:
\begin{equation}\label{eq:roth_eq}
x(S)>0, \quad x(T)<0,
\end{equation}
or vice versa.
\end{prop}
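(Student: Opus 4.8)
The plan is to reduce the signless Laplacian to the ordinary Laplacian by a signature similarity and then invoke the Perron--Frobenius theorem.

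First I would introduce the diagonal $\pm 1$ matrix $P$ whose entry at a vertex $v$ is $+1$ if $v \in S$ and $-1$ if $v \in T$; note $P^{2}=I$. Since $H$ is bipartite with parts $S$ and $T$, every edge of $H$ joins $S$ to $T$, so conjugation by $P$ flips the sign of every off-diagonal entry of the adjacency matrix, i.e. $PA(H)P=-A(H)$. The degree matrix $D(H)$ and the given diagonal matrix $D$ commute with $P$, so
$$
P\bigl(Q(H)+D\bigr)P = PD(H)P + PA(H)P + PDP = D(H)-A(H)+D = L(H)+D .
$$
Hence $Q(H)+D$ and $L(H)+D$ have the same spectrum, and $x$ is an eigenvector of $Q(H)+D$ for its smallest eigenvalue if and only if $y:=Px$ is an eigenvector of $L(H)+D$ for its smallest eigenvalue.

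Next I would analyze $L(H)+D$. Its off-diagonal entries coincide with those of $-A(H)$, hence are nonpositive, and are negative precisely on the edges of $H$. Choosing a real number $c$ large enough that $M:=cI-(L(H)+D)$ has nonnegative entries, the off-diagonal support of $M$ is exactly the edge set of $H$, so $M$ is an irreducible nonnegative matrix because $H$ is connected. By the Perron--Frobenius theorem, the spectral radius of $M$ is a simple eigenvalue afforded by a strictly positive vector. Translating back, the smallest eigenvalue of $L(H)+D$ is simple and is afforded by a vector $y$ with $y>0$.

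Finally I would transfer this through $P$. Taking $x=Py$ gives an eigenvector of $Q(H)+D$ for its smallest eigenvalue with $x(S)=y(S)>0$ and $x(T)=-y(T)<0$, which is \eqref{eq:roth_eq}. Since the smallest eigenvalue is simple, every eigenvector affording it is a nonzero scalar multiple of this $x$: the positive multiples give \eqref{eq:roth_eq} and the negative multiples give the reversed inequalities, i.e. ``or vice versa.'' The only delicate point --- and the one place the hypothesis that $H$ is connected enters --- is the irreducibility of $M$, which simultaneously yields the positivity of $y$ and the simplicity of the eigenvalue; the rest is routine bookkeeping of the signature conjugation.
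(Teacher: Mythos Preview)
Your argument is correct: the signature conjugation $P(Q(H)+D)P=L(H)+D$ together with Perron--Frobenius for the irreducible nonnegative matrix $cI-(L(H)+D)$ yields simplicity of the smallest eigenvalue and strict positivity of the corresponding eigenvector $y$, and then $x=Py$ gives the claimed sign pattern. Note, however, that the paper does not supply its own proof of this proposition; it is quoted as the main result of Roth~\cite{Roth89}, so there is no in-paper argument to compare against. Your proof is the standard one and is essentially what Roth does.
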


We are interested in generalizing Proposition \ref{prop:basic}, showing that (\ref{eq:roth_eq}) continues to hold even when edges are added on one of the sides of $H$. We keep $D=0$, however. Let us make the following definition:

\begin{defin}\label{defin:roth}
Let $H$ be a connected graph and let $S \subseteq V(H)$ be a maximal independent set. We say that $H$ is \emph{$S$-Roth} if for every eigenvector $x$ corresponding to $\mu(H)$ we have that $$x(S)>0, \quad x(V(H)-S)<0,$$ or vice versa.
\end{defin}

The assumption that $S$ is a maximal independent set is made in order to rule out the uninteresting case when $H$ is bipartite and $S$ is a proper subset of one of the partite sets associated with $H$. In that case there is a smallest eigenvector of $Q(H)$ that is positive on $S$ but has mixed signs on the complement of $S$.

\begin{rmrk}
Notice that if $H$ is $S$-Roth, then $\mu(H)$ must be a simple eigenvalue.
\end{rmrk}



Proposition \ref{prop:basic} can now be stated as:

\begin{prop}
Let $H$ be a connected biparite graph with bipartition $(S,T)$. Then $H$ is $S$-Roth.
\end{prop}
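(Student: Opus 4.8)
The plan is to observe that this proposition is really just a translation of Proposition \ref{prop:basic} into the language of Definition \ref{defin:roth}, so the only work is to check that the hypotheses of that definition are met for the pair $(H,S)$ and then to read off the sign pattern from the conclusion of Proposition \ref{prop:basic}.

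First I would verify that $S$ is a maximal independent set in $H$. Since $(S,T)$ is a bipartition, $S$ is independent; and since $H$ is connected, every vertex $t\in T$ has a neighbour in $H$, which (again by the bipartition) must lie in $S$, so $S\cup\{t\}$ fails to be independent. Hence $S$ is maximal, and moreover $V(H)-S=T$. Next I would invoke Proposition \ref{prop:basic}: for any eigenvector $x$ corresponding to $\mu(H)$ there is a nonzero scalar $c$ with $x(S)=c\mathbf{1}_{|S|}$ and $x(T)=-c\mathbf{1}_{|T|}$. If $c>0$ then $x(S)>0$ and $x(V(H)-S)=x(T)<0$; if $c<0$ the two inequalities are reversed. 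In either case the defining condition of an $S$-Roth graph holds, so $H$ is $S$-Roth.

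There is no real obstacle here: the substantive content is entirely contained in Proposition \ref{prop:basic}, and the only genuinely new point to record is the maximality of $S$, which is an immediate consequence of connectivity of $H$. (Implicitly one also uses that for a connected bipartite graph $\mu(H)=0$ is a simple eigenvalue, so that "for every eigenvector corresponding to $\mu(H)$" refers to a one-dimensional eigenspace; this is consistent with the Remark preceding the statement and follows from the well-known multiplicity count quoted in the introduction.)
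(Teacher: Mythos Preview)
Your proposal is correct and matches the paper's approach exactly: the paper presents this proposition as nothing more than a restatement of Proposition~\ref{prop:basic} in the language of Definition~\ref{defin:roth}, without giving a separate proof. You have simply made explicit the one small check (maximality of $S$, forced by connectivity) that the paper leaves implicit.
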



\subsection{Plan of the paper}
Section \ref{sec:motivate} will be devoted to a discussion of the motivation behind the concept of a $S$-Roth graph. In sections \ref{sec:facts} and \ref{sec:tools} we marshal the definitions and tools requisite to the study $S$-Roth graphs. 

Armed thus, in Sections \ref{sec:basic} and \ref{sec:gamma} we analyze the definition and obtain a combinatorial condition, Theorem \ref{thm:harmcond}, that ensures that a given graph is $S$-Roth. The next two sections discuss and illustrate this result: in Section \ref{sec:yeast} we show how it applies to a graph from \cite{Protein}, while in Section \ref{sec:case} we undertake an exhaustive study of some families of special cases, in order to indicate both the power of Theorem \ref{thm:harmcond} and its limitations.

The following sections, \ref{sec:kst} through \ref{sec:cycle} take up the case of a complete bipartite graph to one of whose sides edges are added. In a sense, this is the ``ideal case" of the problem on protein networks outlined in Section \ref{sec:noise}; apart from affording a challenging technical problem, it throws some light on the more general case by indicating that $S$-Rothness can occur in two widely disparate situations - either when we create a dense graph induced on $T$ or when we create a sparse graph there. This insight is applicable to the general case as well. 

The techniques needed to handle the two situations are a bit different: Theorem \ref{thm:Gdeg} which addresses the dense case is derived using the method of Section \ref{sec:basic}, whereas Theorem \ref{thm:deg2} which handles the sparse case require a reappraisal of the problem and the introduction of some more matrix-theoretic machinery (carried out in Sections \ref{sec:q_alt} and \ref{sec:more}, respectively). 

Finally, in Section \ref{sec:open} we suggest some open problems for the further development of the subject.

\section{Motivation}\label{sec:motivate}
The concept of a $S$-Roth graph originally arose as part of an effort to develop algorithms that detect bipartite structures in noisy data. It also weaves together a number of strands of algbraic graph theory and matrix theory.

\subsection{The lock-and-key model for protein networks}
Our description in this subsection is based on the paper \cite{Protein} by Morrison \emph{et al.}
Biologists study networks of protein interactions, representing them as graphs whose vertices are the proteins and connecting vertices by an edge if the corresponding proteins interact. Once the network is constructed, one would like to analyze it in order to detect groups of proteins within which there is strong interactions. Once such a subgroup has been identified, the biologist can study it in more depth, looking for the underlying chemical and biological mechanisms which cause the proteins to interact strongly with another. The graph theorist's job in this case is to facilitate the detection of the subgroup.

Put in this - addmittedly somewhat naive way - the relevant graph-theoretical problem seems to be the detection of cliques. However, it turns out that a biologically more plausible model for protein groupings is the so-called ``lock-and-key" model, introduced in \cite{Protein}. Imagine that a set of keys and a set of locks have been isssued to some of the vertices and that vertices interact if one has a key and the other a lock. We thus get a complete bipartite subgraph as a model for the protein group. (For the digraph version of the problem see \cite{TayVasHig11}).

So far goes the theory - in order to actually construct such a graph we need to perform an experimental assay which determines which protein pairs interact with each other and which pairs do not. Since experimental data are always attended by some amount of noise, we must take into account that our graph will have some spurious edges and lack other edges which ought to have been be present. This means that we should in fact be looking for a \emph{nearly-complete almost-bipartite subgraph}. 

The authors of \cite{Protein} have proposed a method to detect such subgraphs using eigenvectors of the adjacency matrix. We feel however that the signless Laplacian matrix may be better attuned to the problem at hand.

\subsubsection{The spectral approach}
Suppose that the graph $G$ has a connected component $C$ isomorphic to $K_{p,q}$ for some $p$ and $q$. Then we have $\mu(G)=0$ and the kernel of $Q(G)$ contains a vector $v$ that is $1$ on one partite class of $C$, $-1$ on the other partite class, and $0$ on the rest of the vertices of $G$. It stands to reason that neither $\mu(G)$ nor $v$ would change much if we performed a few times on $G$ one of the following operations:
\begin{itemize}
\item
Deleting an edge between the two partite classes of $C$.
\item
Adding an edge within a partite class of $C$.
\item 
Adding an edge between $C$ and $G-C$.
\end{itemize}
In biological parlance, $C$ is the protein grouping we are looking for in the large network $G$, and the operations we have just discussed correspond to the experimental noise.

If the sign pattern of $v$ does not change after the application of the noise operations, we can use $v$ in an algorithm that detects $C$ as a nearly-complete almost-bipartite subgraph of $G$. The mathematical problem here is then to quantify this assertion by finding conditions under which the sign pattern of $v$ can be assured to hold (by limiting, say, the number of times we are allowed to apply the noise operations). 

A theorem of this kind can be used in practice to inform the practitioner whether the spectral algorithm applies to his problem, given some knowledge about the level of noise in the experiment. If the noise level does not exceed the threshold guaranteed by the theorem, then the spectral detection algorithm can be safely and profitably applied.

\subsubsection{$S$-Rothness and a model of a noisy graph}
In this paper we focus on the effects of the first two noise operations described above. We will in effect be assuming that $G=C$, so that the third operation is not relevant to our set-up. We will also restrict the second operation to the addition of edges only within one of the partite classes of $G$ (this could correspond to a situation where the experimental machinery is prone to misdetecton of spurious interactions between, say, keys, but is not likely to report a spurious interaction between locks).
What now obtains is exactly the definition of $S$-Rothness. \emph{A graph is $S$-Roth if its underlying bipartite structure can be recovered by examining the eigenvector $v$}.

\subsection{$Q(G)$ and bipartite subgraphs}\label{sec:noise}

Desai and Rao \cite{DesRao94} first tackled the issue of what structural bipartiteness properties can be derived from the fact that the smallest eigenvalue $\mu(G)$ of $Q(G)$ is small but non-zero. They showed that a low value of $\mu(G)$ indicates the presence of a nearly-bipartite subgraph of $G$ that is weakly connected to the rest of a graph. Very recently their results were improved in \cite{FalFan12}. However, neither \cite{DesRao94} nor \cite{FalFan12} gives a way to detect such a subgraph in a given graph $G$. 

A constructive approach was taken in \cite{KirDeb11} where a condition was established for a given subset of $S \subseteq V(G)$ to induce a bipartite subgraph, based on $\mu(G)$ and the Rayleigh quotient for $Q(G)$ of a certain indicator vector of $S$.

\subsection{Perturbation of $\mu(G)$}
The recent paper \cite{HePan12} studies how $\mu(G)$ is affected by small changes in the structure of $G$, such as vertex deletion or edge subdivision. Our work can be seen as a counterpart to theirs by examining instead the changes in the corresponding eigenvector - a more daunting problem.

\subsection{Graph eigenvector sign patterns and nodal domains}
The study of the sign patterns of graph eigenvectors goes back to Fiedler's groundbreaking work \cite{Fie75} on the eigenvector of the second smallest eigenvalue of $L(G)$. Fiedler's work has been extended and generalized to study the \emph{nodal domains} of eigenvectors of graphs (a nodal domain is a maximal connected subgraph whose vertices have the same sign, either in the strong or in the weak sense). For more information about this we point the reader to \cite{DavGlaLeySta01,BiyHorLeyPisSta04}. It is also interesting to note that Roth's theorem (Proposition \ref{prop:roth89} in our notation) has been recently re-interpreted as a theorem on nodal domains in \cite{Oren07}.


\subsection{Eigenvector sign patterns of general matrices}
Finally, we note that the preservation of the sign pattern of an eigenvector under a perturbation of the underlying matrix has been studied in the matrix-theoretical setting. Stated in such generality, this question poses formidable difficulties and there is relatively little that can be said about it. Nevertheless, we refer the reader to the papers \cite{DeifRohn94} and \cite{Har95} for some interesting results.

\section{Some useful terms and facts}\label{sec:facts}
Terms used without explanation may be found in the book \cite{GraphsDigraphs}. The minimum and maximum degrees of the graph $G$ will be denoted by $\delta(G)$ and $\Delta(G)$, respectively. The degree of a vertex $v$ in a graph $G$ will be denoted by $d_{G}(v)$. To indicate that vertices $i,j \in V(G)$ are adjacent we will employ the notation $ij \in E(G)$ or $i \sim_{G} j$.
A cycle (path) on $n$ vertices will be denoted $C_{n}$ ($P_{n}$).

The \emph{disjoint union} of two graphs
$G_{1}$,$G_{2}$ will be denoted by $G_{1} \cup G_{2}$, and is the
graph whose vertex and edge sets are the disjoint unions of those
of $G_{1}$ and $G_{2}$. 

The \emph{join}, denoted by $G_{1} \vee
G_{2}$, is obtained from $G_{1} \cup G_{2}$ by adding to it all
edges between vertices in $V(G_{1})$ and vertices in $V(G_{2})$. Finally, the \emph{complement} $\overline{G}$ of a graph $G$ is the graph on the same vertex set whose edges are those and only those not present in $G$.

From these definitions the following simple but very useful fact immediately arises:
\begin{prop}\label{prop:join1}
Let $G$ be any graph. Then $G$ can be written as a join $G=G_{1} \vee G_{2}$ if and only if $\overline{G}$ is disconnected.
\end{prop}

In light of Proposition \ref{prop:join1} we can speak of a \emph{maximal join decomposition} of a graph $G$ as 
$
G=G_{1} \vee G_{2} \vee \ldots \vee G_{k}$,
where each $\overline{G}_{i}$ is connected. 

We also need some facts about the (usual) Laplacian eigenvalues.
The first lemma is an expanded statement of \cite[Proposition 2.3]{Moh97}, including some properties established in the proof.
\begin{lem}\label{lem:mohar}
Let $G$ be a graph on $n$ vertices. Then:
\begin{itemize}
\item
$\mathbf{1}_{n}$ is a zero eigenvector of $L(G)$. 
\item
If $G$ is connected, then all other eigenvectors of $L(G)$ are orthogonal to $\mathbf{1}_{n}$.
\item
Every zero eigenvector of $L(G)$ takes a constant value on each connected component of $G$.
\end{itemize}
\end{lem}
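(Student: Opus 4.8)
The statement to prove is Lemma~\ref{lem:mohar}, the three bullet points about the Laplacian $L(G)$.

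The plan is to verify each bullet point in turn, all of which flow from the quadratic form identity $x^{T}L(G)x = \sum_{ij \in E(G)} (x_i - x_j)^2$, which I would establish first by expanding $L(G) = D(G) - A(G)$ and collecting terms edge by edge. This identity is the workhorse for everything that follows and its proof is a routine rearrangement of the sum $\sum_i d_G(i) x_i^2 - 2\sum_{ij \in E(G)} x_i x_j$.

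For the first bullet, I would simply note that row $i$ of $L(G)$ has diagonal entry $d_G(i)$ and off-diagonal entries $-1$ in exactly the $d_G(i)$ positions corresponding to neighbors of $i$, so $(L(G)\mathbf{1}_n)_i = d_G(i) - d_G(i) = 0$; hence $L(G)\mathbf{1}_n = \mathbf{0}$. For the second bullet, since $L(G)$ is real symmetric it has an orthogonal basis of eigenvectors; if $G$ is connected I would first argue that the zero eigenvalue is simple — from the quadratic form identity, $x^{T}L(G)x = 0$ forces $x_i = x_j$ for every edge $ij$, and connectedness then propagates this to show $x$ is a constant multiple of $\mathbf{1}_n$ — so any eigenvector for a nonzero eigenvalue, lying in the orthogonal complement of the eigenspace $\mathrm{span}\{\mathbf{1}_n\}$, is orthogonal to $\mathbf{1}_n$. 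For the third bullet, the same quadratic form argument applied without the connectedness hypothesis shows that a zero eigenvector $x$ satisfies $x_i = x_j$ whenever $i \sim_G j$, and since any two vertices in the same connected component are joined by a path, $x$ is constant on each component.

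The only mild subtlety — and the closest thing to an obstacle — is making the connectedness-implies-simplicity step fully rigorous: one must observe that $L(G)$ being positive semidefinite (immediate from the quadratic form identity, since it is a sum of squares) means a zero eigenvector is exactly a minimizer of the quadratic form, hence lies in the kernel of the form, hence is constant on components. Everything else is bookkeeping. I would present the quadratic form identity once and then dispatch all three bullets in a few lines each.
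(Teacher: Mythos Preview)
Your proof is correct and entirely standard. The paper, however, does not supply its own proof of this lemma: it is stated as a citation of \cite[Proposition 2.3]{Moh97} (with the remark that some of the properties listed are established in the proof there), and no argument is given in the paper itself. Your approach via the quadratic form identity $x^{T}L(G)x=\sum_{ij\in E(G)}(x_i-x_j)^2$ is exactly the classical one and would serve perfectly well as a self-contained justification.
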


\begin{lem}\cite[Theorem 2.1]{Merris98}\label{lem:join}
If $H$ is a graph on $n$ vertices and $H=G_{1} \vee G_{2}$, then $\lambda_{n}(L(H))=n$.
\end{lem}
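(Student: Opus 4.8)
The plan is to work with the complement $\overline H$ and exploit the identity
$$L(H) + L(\overline{H}) = nI_n - \mathbf{1}_n\mathbf{1}_n^T,$$
valid for every graph $H$ on $n$ vertices: each vertex has degree $d$ in $H$ and $n-1-d$ in $\overline H$, so $D(H)+D(\overline H)=(n-1)I_n$, while $A(H)+A(\overline H)=\mathbf 1_n\mathbf 1_n^T-I_n$, and subtracting gives the claim. Since $H=G_1\vee G_2$, Proposition \ref{prop:join1} tells us $\overline H$ is disconnected; more precisely $\overline H=\overline{G_1}\cup\overline{G_2}$, so each of $V(G_1)$ and $V(G_2)$ is a union of connected components of $\overline H$.

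First I would prove the upper bound $\lambda_n(L(H))\le n$ by a Rayleigh-quotient computation. For any unit vector $y\in\mathbb R^n$, the identity gives
$$y^{T}L(H)y = n - (\mathbf 1_n^{T}y)^{2} - y^{T}L(\overline H)y \le n,$$
because $L(\overline H)$ is positive semidefinite; hence $\lambda_n(L(H))=\max_{\|y\|=1}y^{T}L(H)y\le n$.

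Next I would exhibit $n$ as an eigenvalue of $L(H)$, which then forces equality. Define $x\in\mathbb R^n$ by $x(v)=|V(G_2)|$ for $v\in V(G_1)$ and $x(v)=-|V(G_1)|$ for $v\in V(G_2)$. Then $x\ne 0$, and $\mathbf 1_n^{T}x=|V(G_1)|\,|V(G_2)|-|V(G_2)|\,|V(G_1)|=0$. Moreover $x$ is constant on each connected component of $\overline H$, since every such component lies entirely inside $V(G_1)$ or entirely inside $V(G_2)$; hence by the third item of Lemma \ref{lem:mohar}, $L(\overline H)x=0$. Plugging into the identity, $L(H)x = nx-\mathbf 1_n(\mathbf 1_n^{T}x)-L(\overline H)x = nx$, so $n\in\mathrm{spec}(L(H))$ and therefore $\lambda_n(L(H))=n$.

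This is essentially routine once the complement identity and Lemma \ref{lem:mohar} are available, so I do not expect a real obstacle. The one point requiring care --- and the only place the full strength of the hypothesis $H=G_1\vee G_2$ (rather than merely ``$\overline H$ disconnected with some partition'') is used --- is the verification that $V(G_1)$ and $V(G_2)$ are unions of connected components of $\overline H$, which is what makes the test vector $x$ simultaneously a null vector of $L(\overline H)$ and orthogonal to $\mathbf 1_n$. If one wished to avoid $\overline H$ entirely, the same $x$ can be fed directly into the Rayleigh quotient $x^{T}L(H)x/x^{T}x$, using that every vertex of $G_1$ is adjacent to every vertex of $G_2$; but passing through $L(\overline H)$ keeps the bookkeeping lighter.
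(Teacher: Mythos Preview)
The paper does not give its own proof of this lemma; it is simply quoted from \cite[Theorem 2.1]{Merris98}. Your argument is correct and is in fact the standard one, via the complement identity $L(H)+L(\overline H)=nI-J$ together with an explicit eigenvector.

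One small citation slip: when you deduce $L(\overline H)x=0$ from the fact that $x$ is constant on each connected component of $\overline H$, you invoke the third item of Lemma~\ref{lem:mohar}, but that item asserts the \emph{converse} implication (every null vector is constant on components). What you need is the easier direction---a vector constant on each component lies in $\ker L(\overline H)$---which follows at once from the first item of Lemma~\ref{lem:mohar} applied to each connected component (or directly from $x^{T}L(\overline H)x=\sum_{ij\in E(\overline H)}(x_i-x_j)^2=0$). This is a labeling issue, not a mathematical gap.
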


\begin{lem}\label{lem:join_max}
Let $H=G_{1} \vee G_{2} \vee \ldots \vee G_{k}$ be a maximal join decomposition of $H$ and assume that $k \geq 2$. Let $n=|V(H)|$. Let $x$ be an eigenvector of $L(H)$ corresponding to $n$. Then $x$ is constant on the vertex set of each $G_{i}$, $i=1,2,\ldots,k$.
\end{lem}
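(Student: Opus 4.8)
The plan is to reduce the statement about the maximal join decomposition to an iterated application of Lemma \ref{lem:join}, using the spectral information from Lemma \ref{lem:mohar}. First I would observe that $H = G_1 \vee (G_2 \vee \cdots \vee G_k)$, so by Lemma \ref{lem:join} we already know $\lambda_n(L(H)) = n$; the content of the present lemma is the eigenvector structure, not the eigenvalue. Write $H_i = G_i$ thought of as a graph on its own vertex set $V_i$, and let $n_i = |V_i|$, so $n = n_1 + \cdots + n_k$. The key structural fact is that for a join $H = G_1 \vee G_2$, one has $L(H) = L(G_1) \oplus L(G_2) + (\text{a rank-controlled correction})$; more precisely, a vertex in $V_1$ has, in $H$, all of $V_2$ added to its neighbourhood, so $L(H)$ restricted appropriately looks like $L(G_1) + n_2 I$ on the $V_1$-block, $L(G_2) + n_1 I$ on the $V_2$-block, and $-J$ (all-ones) on the off-diagonal blocks. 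I would make this block decomposition explicit and then analyze the eigenequation $L(H)x = nx$ block by block.

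The heart of the argument: partition $x = (x^{(1)}, \dots, x^{(k)})$ according to the $V_i$. Looking at the $i$-th block of the equation $L(H)x = nx$, and using that every vertex of $V_i$ is adjacent in $H$ to every vertex outside $V_i$, the off-diagonal contribution to row $v \in V_i$ is $-\sum_{j \neq i} (\mathbf{1}_{n_j}^T x^{(j)})$, which is a single scalar $\sigma_i$ independent of $v$. Hence the $i$-th block equation reads $L(G_i)x^{(i)} + (n - n_i) x^{(i)} - \sigma_i \mathbf{1}_{n_i} = n x^{(i)}$, i.e. $L(G_i) x^{(i)} = n_i x^{(i)} - \sigma_i \mathbf{1}_{n_i}$. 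Now take the inner product of this with $\mathbf{1}_{n_i}$: since $\mathbf{1}_{n_i}$ is a null vector of $L(G_i)$ (Lemma \ref{lem:mohar}), the left side vanishes, forcing $n_i (\mathbf{1}_{n_i}^T x^{(i)}) = n_i \sigma_i$, so $\sigma_i = \mathbf{1}_{n_i}^T x^{(i)} =: s_i$. Substituting back, $L(G_i) x^{(i)} = n_i x^{(i)} - s_i \mathbf{1}_{n_i}$. Splitting $x^{(i)} = \frac{s_i}{n_i}\mathbf{1}_{n_i} + y^{(i)}$ with $y^{(i)} \perp \mathbf{1}_{n_i}$ gives $L(G_i) y^{(i)} = n_i y^{(i)}$, i.e. $y^{(i)}$ is a Laplacian eigenvector of $G_i$ for the eigenvalue $n_i$ — but the Laplacian eigenvalues of a graph on $n_i$ vertices are at most $n_i$, with equality attained only when $G_i$ itself decomposes as a (nontrivial) join, which is precisely ruled out by maximality of the decomposition (here $\overline{G_i}$ is connected, so by Proposition \ref{prop:join1} $G_i$ is not a join, and then Lemma \ref{lem:join} cannot be invoked to produce eigenvalue $n_i$; more carefully, a connected graph on $n_i$ vertices has $n_i$ as a Laplacian eigenvalue iff its complement is disconnected). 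Therefore $y^{(i)} = 0$ unless $n_i = 1$, in which case $y^{(i)} = 0$ trivially as well; either way $x^{(i)} = \frac{s_i}{n_i}\mathbf{1}_{n_i}$ is constant on $V_i$, which is the claim.

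I expect the main obstacle to be the final step: cleanly justifying that $n_i$ is \emph{not} a Laplacian eigenvalue of $G_i$. One must be slightly careful when $G_i$ is disconnected — then $n_i$ could be an eigenvalue only if some component is on all $n_i$ vertices being a join, which again contradicts $\overline{G_i}$ connected, but the bookkeeping needs the standard fact that $\lambda_{\max}(L(G)) = n$ for a graph on $n$ vertices precisely when $\overline{G}$ is disconnected. I would isolate this as a short sub-claim (or cite it, since it is the converse direction of Lemma \ref{lem:join} and is classical, e.g. from the eigenvalue interlacing $\lambda_k(L(G)) + \lambda_{n+1-k}(L(\overline G)) $ relation, or from $L(G) + L(\overline G) = nI - J$ on the space orthogonal to $\mathbf 1$). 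A secondary, purely cosmetic obstacle is handling the degenerate cases $n_i = 1$ and making sure the decomposition $x^{(i)} = \frac{s_i}{n_i}\mathbf 1_{n_i} + y^{(i)}$ is done uniformly; these are routine. Everything else is the block computation, which is straightforward once the adjacency structure of the join is written down.
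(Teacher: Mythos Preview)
Your argument is correct (modulo a harmless sign slip in the intermediate definition of $\sigma_i$; your final equation $L(G_i)x^{(i)} = n_i x^{(i)} - s_i\mathbf{1}_{n_i}$ is right). However, you take a considerably longer route than the paper. The paper applies the identity $L(H)+L(\overline{H})=nI-J$ once, directly to $H$: since $H$ is connected, any eigenvector $x$ for the eigenvalue $n$ of $L(H)$ is orthogonal to $\mathbf{1}$ (Lemma~\ref{lem:mohar}), so $Jx=0$ and hence $L(\overline{H})x=0$; then the third part of Lemma~\ref{lem:mohar} says $x$ is constant on each connected component of $\overline{H}=\overline{G_1}\cup\cdots\cup\overline{G_k}$, and these components are exactly the $V(G_i)$. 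No block computation, no need for the converse of Lemma~\ref{lem:join}. Your approach instead pushes the same identity down to each $G_i$ separately, after first isolating the equation $L(G_i)y^{(i)}=n_i y^{(i)}$ by hand; this works, and arguably makes the role of maximality more visible (it is precisely what kills the residual $y^{(i)}$), but it costs you the block calculation and the auxiliary claim that $\lambda_{\max}(L(G_i))<n_i$ when $\overline{G_i}$ is connected. That auxiliary claim is, as you note, essentially the identity $L(G_i)+L(\overline{G_i})=n_iI-J$ restricted to $\mathbf{1}^{\perp}$ combined with Lemma~\ref{lem:mohar}; so in the end both proofs rest on the same ingredients, but the paper's is a two-line argument while yours unpacks and repacks the same information.
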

\begin{proof}
The complement $\overline{H}$ is the disjoint union of the complements of the joinees: $\overline{H}=\overline{G}_{1} \cup \overline{G}_{2} \cup \ldots \cup \overline{G}_{k}$.
Observe that $L(H)+L(\overline{H})=nI-J$. Therefore, by the first two parts of Lemma \ref{lem:mohar} we see that if $x$ is an eigenvector of $L(H)$ corresponding to $n$, then $x$ is also a zero eigenvector of $L(\overline{H})$. The conclusion now follows from the third part of Lemma \ref{lem:mohar}.
\end{proof}


Let us now list a few facts about signless Laplacians that will prove useful in the sequel. 

\begin{lem}\cite{Das10}\label{lem:das}
If $\delta(G)>0$, then $\mu(G) < \delta(G)$.
\end{lem}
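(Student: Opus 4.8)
Plan for proving Lemma \ref{lem:das} ($\delta(G)>0 \implies \mu(G) < \delta(G)$).

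The plan is to exhibit a test vector whose Rayleigh quotient with respect to $Q(G)$ falls strictly below $\delta = \delta(G)$; by the variational characterization $\mu(G) = \min_{y\neq 0} \frac{y^{\top}Q(G)y}{y^{\top}y}$, this will immediately give the bound. The natural candidate is a standard basis vector $e_v$ at a vertex $v$ of minimum degree, $d_G(v) = \delta$. Recall that for any vector $y$ one has the identity $y^{\top}Q(G)y = \sum_{ij\in E(G)} (y_i + y_j)^2$. With $y = e_v$, each edge incident to $v$ contributes $1$ and no other edge contributes, so $e_v^{\top}Q(G)e_v = \delta$, while $e_v^{\top}e_v = 1$, giving Rayleigh quotient exactly $\delta$. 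That alone yields only $\mu(G) \le \delta$, so the work is to show the inequality is strict.

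Strictness would follow if $e_v$ were not itself an eigenvector of $Q(G)$ for the eigenvalue $\delta$, i.e.\ if $Q(G)e_v \neq \delta e_v$. Computing, $Q(G)e_v$ has entry $\delta$ in position $v$ and entry $1$ in each position $u$ with $u \sim_G v$; since $\delta > 0$, vertex $v$ has at least one neighbor $u$, and the $u$-th entry of $Q(G)e_v$ is $1 \neq 0 = \delta\,(e_v)_u$. Hence $e_v$ is not an eigenvector, and a short standard argument (if a unit vector attains the minimum of a Rayleigh quotient then it must be an eigenvector for the minimal eigenvalue) forces the strict inequality $\mu(G) < \delta$. Alternatively, one can make strictness explicit by perturbing: take $y = e_v + \varepsilon z$ for a suitable $z$ supported on the neighbors of $v$ and check that the Rayleigh quotient decreases to first order in $\varepsilon$; this avoids invoking the abstract fact but requires a small computation.

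I expect the only real subtlety to be the strictness step — the upper bound $\mu(G)\le\delta$ is essentially immediate. The cleanest route is the eigenvector dichotomy: either $e_v$ achieves the minimum and is therefore an eigenvector (contradicting the explicit computation above), or it does not, in which case $\mu(G) < \delta$ directly. One should double-check that $Q(G)$ is symmetric (it is, being $D(G)+A(G)$ with $A(G)$ symmetric) so that the variational principle applies and the "minimizer is an eigenvector" fact holds; no connectedness hypothesis on $G$ is needed for this argument.
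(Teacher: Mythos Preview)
The paper does not supply its own proof of this lemma; it is simply quoted from \cite{Das10} and used as a black box. Consequently there is nothing in the paper to compare your argument against.

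That said, your argument is correct and is essentially the standard one. The Rayleigh quotient of $e_v$ at a minimum-degree vertex $v$ equals $\delta$, so $\mu(G)\le\delta$; and since $\delta(G)>0$ forces $v$ to have a neighbour $u$ with $(Q(G)e_v)_u=1\neq 0$, the vector $e_v$ is not an eigenvector of $Q(G)$, hence cannot realise the minimum of the Rayleigh quotient, giving the strict inequality. Your remark that symmetry of $Q(G)$ is what makes the variational characterisation (and the ``minimiser is an eigenvector'' fact) available is exactly the point, and no connectedness assumption is needed.
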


\begin{thm}\cite{Brazilian}\label{thm:span}
If $G_{0}$ is a spanning subgraph of $G$, then $$\mu(G_{0}) \leq \mu(G).$$
\end{thm}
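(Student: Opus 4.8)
The plan is to exploit the well-known fact that the signless Laplacian is the ``edge-sum'' quadratic form of the graph. Concretely, for any graph $G$ on vertex set $\{1,\dots,n\}$ and any $x \in \mathbb{R}^{n}$ one has the identity
$$x^{T} Q(G) x = \sum_{ij \in E(G)} (x_i + x_j)^2,$$
which is checked by expanding the right-hand side: each vertex $i$ contributes $d_{G}(i)$ copies of $x_i^2$ (accounting for the $D(G)$ part) and each edge $ij$ contributes the cross term $2 x_i x_j$ (accounting for the $A(G)$ part). I would record this identity first, since it is the only computation involved and it immediately shows that $Q(G)$ is positive semidefinite.

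Next I would use that $G_{0}$ is a \emph{spanning} subgraph of $G$, so $V(G_{0}) = V(G)$ and $E(G_{0}) \subseteq E(G)$; hence for every $x \in \mathbb{R}^{n}$,
$$x^{T} Q(G) x = \sum_{ij \in E(G_{0})} (x_i + x_j)^2 + \sum_{ij \in E(G) \setminus E(G_{0})} (x_i + x_j)^2 \;\geq\; \sum_{ij \in E(G_{0})} (x_i + x_j)^2 = x^{T} Q(G_{0}) x,$$
because every discarded summand is a square. Equivalently, $Q(G) - Q(G_{0})$ is positive semidefinite.

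Finally I would invoke the variational (Rayleigh--Ritz) characterization of the least eigenvalue of a real symmetric matrix $M$, namely $\lambda_{\min}(M) = \min_{x \neq 0} \tfrac{x^{T} M x}{x^{T} x}$. Since the numerator for $Q(G_{0})$ never exceeds that for $Q(G)$ while the denominator is unchanged, the Rayleigh quotient for $Q(G_{0})$ is pointwise at most that for $Q(G)$; taking the minimum over $x \neq 0$ of each side yields $\mu(G_{0}) \le \mu(G)$, as desired.

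As for the main obstacle: there is essentially none of substance — the statement is a direct consequence of pointwise monotonicity of the quadratic form together with the Rayleigh quotient formula, and the only point requiring care is the bookkeeping in the quadratic-form identity. An equivalent route for the last step is Weyl's monotonicity theorem: writing $Q(G) = Q(G_{0}) + Q(G')$, where $G'$ is the spanning subgraph on the remaining edges $E(G)\setminus E(G_{0})$ and $Q(G')$ is positive semidefinite, one gets $\mu(G_{0}) = \lambda_{\min}(Q(G_{0})) \le \lambda_{\min}(Q(G_{0}) + Q(G')) = \mu(G)$.
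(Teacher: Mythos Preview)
Your proof is correct. The paper does not actually supply its own proof of this theorem: it is quoted from \cite{Brazilian} without argument, and the surrounding Theorem~\ref{thm:cf} records precisely the Rayleigh-quotient identity you use. So your proposal is exactly the standard derivation one would expect, and there is nothing further to compare.
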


\begin{thm}[cf. \cite{Brazilian}]\label{thm:cf}
If $G$ is a graph on $n$ vertices, then
$$
\mu(G)=\min_{x \in \mathbb{R}^{n}-\{0\}}{\frac{x^{T}Qx}{x^{T}x}}=\min_{x \in \mathbb{R}^{n}-\{0\}}{\frac{\sum_{ij \in E(G)}{(x_{i}+x_{j})^{2}}}{x^{T}x}}
$$
\end{thm}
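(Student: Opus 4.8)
The plan is to establish the algebraic identity for the quadratic form first and then invoke the standard variational principle for real symmetric matrices.

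First I would introduce the (unsigned) vertex--edge incidence matrix $B \in \mathbb{R}^{n \times |E(G)|}$, whose $(v,e)$ entry is $1$ if $v$ is an endpoint of the edge $e$ and $0$ otherwise. A direct computation of $BB^{T}$ shows that its $(i,i)$ entry counts the edges incident with $i$, hence equals $d_{G}(i)$, while for $i \neq j$ its $(i,j)$ entry equals $1$ precisely when $ij \in E(G)$ and $0$ otherwise; therefore $BB^{T} = D(G) + A(G) = Q(G)$. This in passing re-proves that $Q(G)$ is positive semidefinite.

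Next, for any $x \in \mathbb{R}^{n}$ I would write $x^{T}Q(G)x = x^{T}BB^{T}x = (B^{T}x)^{T}(B^{T}x) = \|B^{T}x\|^{2}$. The coordinate of $B^{T}x$ indexed by an edge $e = ij$ is exactly $x_{i}+x_{j}$, so $\|B^{T}x\|^{2} = \sum_{ij \in E(G)}(x_{i}+x_{j})^{2}$, which is the second expression in the statement. (One can avoid $B$ altogether: expand $x^{T}Q(G)x = \sum_{i} d_{G}(i)x_{i}^{2} + 2\sum_{ij \in E(G)} x_{i}x_{j}$ and use the incidence count $\sum_{i} d_{G}(i)x_{i}^{2} = \sum_{ij \in E(G)}(x_{i}^{2}+x_{j}^{2})$ to recombine the terms.)

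Finally, since $Q(G)$ is real and symmetric, the Rayleigh--Ritz theorem gives $\mu(G) = \min_{x \in \mathbb{R}^{n}-\{0\}} \frac{x^{T}Q(G)x}{x^{T}x}$, with the minimum attained at an eigenvector for the smallest eigenvalue; combining this with the identity above yields the displayed chain of equalities. I do not expect any real obstacle here: the only point requiring a word of care is that the sum runs over the \emph{unordered} edges of $G$, so each edge $ij$ contributes the single term $(x_{i}+x_{j})^{2}$ and is not double-counted, and the degree--incidence bookkeeping in identifying $BB^{T}$ with $Q(G)$ must be done accordingly.
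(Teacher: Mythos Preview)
Your argument is correct: the incidence-matrix factorisation $Q(G)=BB^{T}$ (or the direct expansion you offer as an alternative) yields the quadratic-form identity, and the Rayleigh--Ritz characterisation of the smallest eigenvalue of a real symmetric matrix then gives the first equality. The caveat about counting each unordered edge once is the only bookkeeping subtlety, and you have handled it.

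As for comparison with the paper: there is nothing to compare. The paper does not prove this theorem; it is quoted as a known fact from the literature (the citation ``cf.\ \cite{Brazilian}'') and used as a tool, e.g.\ in the proof of Corollary~\ref{cor:mu4}. Your write-up supplies a standard proof that would be entirely acceptable in its place.
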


\begin{cor}\label{cor:mu4}
Let $H$ be a graph with independent set $S \subseteq V(G)$. Let $T=V(G)-S$ and suppose that the subgraph induced on $T$ has $e$ edges. Then: $$\mu(H) \leq \frac{4e}{|S|+|T|}.$$
\end{cor}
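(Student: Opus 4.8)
The plan is to exhibit an explicit test vector $x$ and apply the Rayleigh-quotient characterization of $\mu(H)$ from Theorem \ref{thm:cf}. The natural candidate is the vector that encodes the bipartition-like structure suggested by Proposition \ref{prop:basic}: set $x_i = 1$ for every $i \in S$ and $x_i = -1$ for every $i \in T$. Then $x \in \mathbb{R}^{|S|+|T|} \setminus \{0\}$ and $x^T x = |S| + |T|$, which handles the denominator immediately.

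For the numerator I would use the edge-sum form $\sum_{ij \in E(H)}(x_i + x_j)^2$ and split the edge set of $H$ according to where the endpoints lie. Since $S$ is independent there are no edges inside $S$. Every edge with one endpoint in $S$ and one in $T$ contributes $(1 + (-1))^2 = 0$. The only edges that contribute are those with both endpoints in $T$; each such edge contributes $((-1) + (-1))^2 = 4$, and by hypothesis there are $e$ of them. Hence $\sum_{ij \in E(H)}(x_i + x_j)^2 = 4e$, and Theorem \ref{thm:cf} gives
$$
\mu(H) \le \frac{4e}{|S| + |T|},
$$
which is exactly the claimed bound.

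There is essentially no obstacle here; the only point requiring a moment's care is confirming that the cross edges contribute nothing, which is where the independence of $S$ (so that all edges touching $S$ go to $T$) together with the $\pm 1$ choice of signs does the work. One should also note that $x \ne 0$ so the quotient is legitimately in the feasible set, and that the statement is vacuous-but-true if $T = \varnothing$ (then $e = 0$ and $S = V(H)$ forces $H$ to have no edges, so $\mu(H) = 0$); in the intended reading $H$ has edges and the bound is meaningful. I would present the argument in two or three lines, citing Theorem \ref{thm:cf} for the variational formula.
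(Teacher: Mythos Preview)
Your proposal is correct and follows essentially the same approach as the paper: define a $\pm 1$ test vector according to the $S/T$ partition and plug it into the Rayleigh quotient of Theorem~\ref{thm:cf}. The only cosmetic difference is that the paper takes $x_i=-1$ on $S$ and $x_i=1$ on $T$, which of course yields the identical computation.
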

\begin{proof}
Define the vector $x \in \mathbb{R}^{t+s}$ by: 
$$
x_{i}=\begin{cases}
-1  & \text{, if } i \in S \\
1   & \text{, if } i \in T \\
\end{cases}.
$$ 
Now apply Theorem \ref{thm:cf} with the $x$ that we have just defined:
$$
\mu(H) \leq \frac{\sum_{ij \in E(G)}{(x_{i}+x_{j})^{2}}}{x^{T}x}=\frac{4e}{|S|+|T|}.
$$
\end{proof}

\section{Matrix-theoretic tools}\label{sec:tools}

We shall write the eigenvalues of a $n \times n$ Hermitian matrix $M$ in non-decreasing order, \emph{i.e.} $\lambda_{1}(M) \leq \lambda_{2}(M) \ldots \leq \lambda_{n}(M)$. $J_{s,t},J_{t}$ or sometimes simply $J$, will denote the all-ones matrix of a suitable size. The $i$th standard basis (column) vector will be denoted by $e_{i}$. The $i$th row sum of a matrix $A$ will be denoted by $r_{i}(A)$.  



\begin{defin}
Let $M$ be a real symmetric matrix whose smallest eigenvalue $\lambda_{1}(M)$ is simple. If $\lambda_{1}(M)$ has a positive eigenvector, then $M$ will be called \emph{minpositive}.
\end{defin}

The class of minpositive matrices is quite wide and includes, for instance, irreducible $Z$-matrices (or more generally, negatives of eventually positive matrices), irreducible inverse-positive (a.k.a. monotone) matrices, and negatives of certain copositive matrices. Note also that since $M$ is Hermitian, minpositivity of $M$ is equivalent to $M^{-1}$ possessing the \emph{strong Perron-Frobenius property} in the sense of \cite{Nou06}.

The next theorem is the weak version of Weyl's inequalities \cite[Theorem 4.3.1]{HornJohnson}, together with the condition for equality that has been given by Wasin So in \cite{So94}. So's condition is valid also for the strong version of Weyl's inequalities (\cite[Theorem 4.3.7]{HornJohnson}) but the weak version will suffice for our needs here.
\begin{thm}\label{thm:weylso}
Let $A,B$ be Hermitian $n \times n$ matrices. Then for any $k=1,2,\ldots,n$ we have:
$$
\lambda_{k}(A)+\lambda_{1}(B) \leq \lambda_{k}(A+B) \leq \lambda_{k}(A)+\lambda_{n}(B)
$$
For each of these inequalities, equality is satisfied if and only if there exists a nonzero coomon eigenvector $x$ for all three matrices $A,B,A+B$, with the appropriate eigenvalues.
\end{thm}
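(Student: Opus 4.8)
The plan is to establish the two weak Weyl inequalities via the Courant--Fischer min-max principle and then to read off So's equality condition by inspecting exactly where each bound becomes tight. First I would recall the two facts used throughout: for a Hermitian $M$ and the Rayleigh quotient $R_{M}(x)=x^{*}Mx/x^{*}x$ one has the Courant--Fischer identity $\lambda_{k}(M)=\min_{\dim V=k}\max_{0\ne x\in V}R_{M}(x)$ (see, e.g., \cite{HornJohnson}), and the Rayleigh quotient is additive, $R_{A+B}=R_{A}+R_{B}$ pointwise. For the upper bound, let $V_{0}$ be the span of an orthonormal set of eigenvectors of $A$ for $\lambda_{1}(A),\dots,\lambda_{k}(A)$; then $\lambda_{k}(A+B)\le\max_{x\in V_{0}}R_{A+B}(x)=\max_{x\in V_{0}}(R_{A}(x)+R_{B}(x))\le\lambda_{k}(A)+\lambda_{n}(B)$, because $R_{A}(x)\le\lambda_{k}(A)$ for $x\in V_{0}$ and $R_{B}(x)\le\lambda_{n}(B)$ for every nonzero $x$. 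The lower bound follows at once by applying the upper bound to the pair $(A+B,-B)$ and using $\lambda_{n}(-B)=-\lambda_{1}(B)$; this same substitution trick will also convert the equality analysis of the upper bound into that of the lower bound, so I would carry out the equality argument only for the upper bound.

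For the equality statement, the ``if'' direction is immediate: if $x\ne 0$ satisfies $Ax=\lambda_{k}(A)x$, $Bx=\lambda_{n}(B)x$ and $(A+B)x=\lambda_{k}(A+B)x$, then adding the first two relations forces $\lambda_{k}(A+B)=\lambda_{k}(A)+\lambda_{n}(B)$. Conversely, suppose $\lambda_{k}(A+B)=\lambda_{k}(A)+\lambda_{n}(B)$. Then the chain of inequalities above collapses, so in particular $V_{0}$ attains the outer minimum for $A+B$ and $\max_{x\in V_{0}}(R_{A}(x)+R_{B}(x))=\lambda_{k}(A)+\lambda_{n}(B)$; choose a unit vector $x_{0}\in V_{0}$ attaining this maximum. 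Since $R_{A}(x_{0})\le\lambda_{k}(A)$ and $R_{B}(x_{0})\le\lambda_{n}(B)$ while their sum equals $\lambda_{k}(A)+\lambda_{n}(B)$, both are equalities. The equality $R_{B}(x_{0})=\lambda_{n}(B)$ means $x_{0}$ is an eigenvector of $B$ for $\lambda_{n}(B)$. For $A$, write $x_{0}=\sum_{i\le k}c_{i}v_{i}$ in the chosen eigenbasis; then $R_{A}(x_{0})=\lambda_{k}(A)$ rearranges to $\sum_{i}|c_{i}|^{2}(\lambda_{k}(A)-\lambda_{i}(A))=0$ with every summand nonnegative, so $c_{i}=0$ whenever $\lambda_{i}(A)<\lambda_{k}(A)$, and hence $Ax_{0}=\lambda_{k}(A)x_{0}$. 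Therefore $(A+B)x_{0}=(\lambda_{k}(A)+\lambda_{n}(B))x_{0}=\lambda_{k}(A+B)x_{0}$, and $x_{0}$ is the required common eigenvector. Replacing $(A,B,A+B)$ by $(A+B,-B,A)$ then gives the lower-bound case with no extra work.

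The one step I would be most careful about is the passage, under the equality hypothesis, from ``equality of eigenvalues'' to ``$V_{0}$ realizes the outer minimum for $A+B$ and the Rayleigh-quotient bound on $V_{0}$ is tight.'' This is what makes the argument go through, and it rests solely on the pointwise additivity $R_{A+B}=R_{A}+R_{B}$ together with the trivial squeeze $\lambda_{k}(A+B)\le\max_{x\in V_{0}}R_{A+B}(x)\le\lambda_{k}(A)+\lambda_{n}(B)=\lambda_{k}(A+B)$; notably, no relationship between the eigenvectors of $A$ and those of $A+B$ is needed. Everything else reduces to elementary bookkeeping with Rayleigh quotients on the fixed subspace $V_{0}$.
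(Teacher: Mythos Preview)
Your argument is correct. The Courant--Fischer bound with the particular subspace $V_{0}$ gives the upper Weyl inequality, the substitution $(A,B)\mapsto(A+B,-B)$ yields the lower one, and your squeeze $\lambda_{k}(A+B)\le\max_{x\in V_{0}}R_{A+B}(x)\le\lambda_{k}(A)+\lambda_{n}(B)$ correctly produces, under the equality hypothesis, a maximizer $x_{0}\in V_{0}$ at which both $R_{A}$ and $R_{B}$ attain their respective ceilings; the standard Rayleigh-quotient rigidity then forces $x_{0}$ into the required eigenspaces. The reduction of the lower-bound equality case to the upper-bound one via the same substitution is also fine.

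As for comparison with the paper: there is nothing to compare. The paper does not prove Theorem~\ref{thm:weylso}; it quotes the inequalities from \cite{HornJohnson} and the equality characterization from \cite{So94} and then uses the result as a black box (in Theorem~\ref{thm:mu2}). Your write-up therefore supplies a self-contained proof where the paper offers only a citation. The route you take is the classical one for the weak Weyl inequalities and is in the spirit of So's original argument, specialized to the case $i+j=k+1$ (respectively $i+j=k+n$) where a single test subspace suffices.
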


Let us establish a simple result relating the usual and signless Laplacians. It is slightly reminiscent of \cite[Theorem 2.1]{KirDeb11}.
\begin{thm}\label{thm:mu2}
Let $G$ be a graph with smallest signless Laplacian eigenvalue $\mu$ and largest Laplacian eigenvalue $\lambda$. Let the minimum degree of $G$ be $\delta=\delta(G)$. Then: $$\mu \geq 2\delta-\lambda.$$
Equality obtains if and only if there is a vector $x$ so that:
\begin{itemize}
\item
$Q(G)x=\mu x$,
\item
$L(G)x=\lambda x$,
\item
$D(G)x=\delta x$.
\end{itemize}
\end{thm}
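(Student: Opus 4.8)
The plan is to exploit the identity $Q(G)+L(G)=2D(G)$, which rewrites as $Q(G)=2D(G)-L(G)$, and then feed this into the weak Weyl inequality of Theorem \ref{thm:weylso}. First I would set $A=2D(G)$ and $B=-L(G)$, so that $A+B=2D(G)-L(G)=D(G)+A(G)=Q(G)$. Since $A$ is diagonal with entries $2d_G(v)$, its smallest eigenvalue is $\lambda_1(A)=2\delta$; and since the eigenvalues of $-L(G)$ are the negatives of those of $L(G)$ listed in reverse order, $\lambda_1(B)=-\lambda_n(L(G))=-\lambda$.

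Next I would apply the lower Weyl bound from Theorem \ref{thm:weylso} with $k=1$, namely $\lambda_1(A)+\lambda_1(B)\le\lambda_1(A+B)$, which reads $2\delta-\lambda\le\mu$. That is the asserted inequality.

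For the equality case I would invoke the second half of Theorem \ref{thm:weylso}: equality holds in $\lambda_1(A)+\lambda_1(B)\le\lambda_1(A+B)$ precisely when there is a nonzero vector $x$ that is simultaneously an eigenvector of $A$, $B$, and $A+B$ with the relevant eigenvalues, i.e. $2D(G)x=2\delta x$, $-L(G)x=-\lambda x$, and $Q(G)x=\mu x$. Dividing the first relation by $2$ and negating the second yields exactly the three listed conditions $Q(G)x=\mu x$, $L(G)x=\lambda x$, $D(G)x=\delta x$ (and one checks these three are consistent since the third equation is the sum of the other two, given $Q+L=2D$). Conversely, any $x$ satisfying the three bullets forces equality, again by Theorem \ref{thm:weylso}.

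I do not expect a serious obstacle here; the only points requiring a moment's care are the bookkeeping of eigenvalue orderings under negation (so that $\lambda_1(-L)=-\lambda_n(L)$) and checking that the common-eigenvector condition supplied by So's equality criterion translates cleanly into the three stated equations rather than something weaker. Both are routine.
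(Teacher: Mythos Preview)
Your proof is correct and is essentially the same as the paper's: both exploit $Q+L=2D$ together with the Weyl inequality (Theorem~\ref{thm:weylso}) and So's equality criterion, the only cosmetic difference being that the paper takes $A=Q$, $B=L$ and uses the upper bound $\lambda_{1}(Q+L)\le\lambda_{1}(Q)+\lambda_{n}(L)$, whereas you take $A=2D$, $B=-L$ and use the lower bound. The equality analysis is identical in both versions.
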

\begin{proof} 
Clearly $Q+L=2D$. By applying Theorem \ref{thm:weylso} for $k=1$ it follows that:
$$
\lambda_{1}(Q+L) \leq \lambda_{1}(Q)+\lambda_{n}(L).
$$
But $\lambda_{1}(Q+L)=\lambda_{1}(2D)=2\delta$ and therefore $\mu + \lambda \geq 2 \delta$.
The equality characterization follows from the last part of Theorem \ref{thm:weylso}.
\end{proof}

Recall further that the \emph{Schur complement} of the partitioned matrix  
\begin{equation}\label{eq:M}
M=\left[
        \begin{array}{cc}
           A& B\\
           C& D
        \end{array}
    \right]
\end{equation}
is $M/D=A-BD^{-1}C$, assuming that $D$ is invertible. 
\begin{thm}\cite{Hay68}\label{thm:schur}
Let $M$ be a Hermitian matrix and let $D$ be a nonsingular principal submatrix of $M$. Then $M$ is positive semidefinite if and only if $D$ and $M/D$ are both positive semidefinite.
\end{thm}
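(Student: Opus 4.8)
The plan is to produce an explicit $*$-congruence that block-diagonalizes $M$ and then appeal to Sylvester's law of inertia. First I would reduce to the case in which the nonsingular submatrix $D$ occupies the bottom-right corner of $M$, as in \eqref{eq:M}: an arbitrary principal submatrix can be moved there by a permutation similarity $M\mapsto P^{T}MP$, which is a $*$-congruence (hence preserves positive semidefiniteness) and which transforms the Schur complement by the same permutation, so nothing is lost. Once $M$ is written in the block form \eqref{eq:M}, Hermiticity of $M$ gives $C=B^{*}$, $A=A^{*}$, $D=D^{*}$; in particular $D^{-1}$ is Hermitian and $M/D=A-BD^{-1}B^{*}$ is Hermitian, so ``positive semidefinite'' is meaningful for both $D$ and $M/D$.

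The heart of the argument is the factorization
$$
M=\begin{bmatrix} I & BD^{-1} \\ 0 & I \end{bmatrix}\begin{bmatrix} M/D & 0 \\ 0 & D \end{bmatrix}\begin{bmatrix} I & 0 \\ D^{-1}B^{*} & I \end{bmatrix},
$$
which I would verify by direct block multiplication (the $(1,1)$ block collapses because $M/D+BD^{-1}B^{*}=A$). The two outer factors are unit triangular, hence invertible, and one is the conjugate transpose of the other precisely because $D^{-*}=D^{-1}$ and $C=B^{*}$; thus $M$ is $*$-congruent to the block-diagonal Hermitian matrix $N=(M/D)\oplus D$.

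By Sylvester's law of inertia, $M$ and $N$ share the same inertia, so $M$ is positive semidefinite if and only if $N$ is. Since a Hermitian block-diagonal matrix is positive semidefinite exactly when each diagonal block is, we get $N\succeq 0$ iff $M/D\succeq 0$ and $D\succeq 0$, which combined with the previous equivalence yields the theorem. I do not expect a genuine obstacle here: the only places demanding care are the bookkeeping that makes the congruence truly a $*$-congruence (which is exactly where Hermiticity of $M$ is used) and the easily-overlooked preliminary permutation reduction; the middle computation is routine. An alternative route, completing the square in the quadratic form $z^{*}Mz$ with $z=(u,v)$ to rewrite it as $(u+D^{-1}B^{*}\,\text{-}\,\text{type shift})^{*}(M/D)(\cdots)+v^{*}Dv$, gives the same conclusion but is essentially the scalar shadow of the same factorization.
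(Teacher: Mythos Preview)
Your argument is correct: the block factorization you write down is the standard Aitken/Schur identity, the outer factors are mutual conjugate transposes precisely because $M$ is Hermitian, and Sylvester's law of inertia then gives the equivalence. The permutation reduction and the check that $M/D$ is itself Hermitian are the right hygiene steps.

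There is nothing to compare against, however: the paper does not prove Theorem~\ref{thm:schur} at all. It is quoted as a known result with the citation \cite{Hay68} and then used as a black box in the proof of Lemma~\ref{lem:smallest}. So your write-up supplies a proof where the paper simply invokes the literature; the approach you give is in fact the classical one (and is essentially what appears in Haynsworth's original paper and in standard references such as Horn--Johnson).
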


\section{Analyzing the $S$-Roth property}\label{sec:basic}
Let $H$ be a connected non-bipartite graph with a maximal independent set $S \subseteq V(G)$ and let $T=V(G)-S$. Two subgraphs of $H$ are of special interest to us: $G_{H,T}$, the subgraph induced by $T$ and $B_{H,S}$, the bipartite subgraph obtained by deleting all the edges between vertices in $T$ from $H$. When in no danger of confusion, we will simply write $G_{H},B_{H}$ or even $G$ or $B$.

Whether $H$ will turn out to be $S$-Roth will depend on the structure of $G_{H,T}$ and $B_{H,S}$ and their interplay. Let us now put the discussion into matrix-theoretic terms. 
Ordering the vertices of $H$ so that the vertices in $T$ are listed first and those in $S$ listed last, we can write the signless Laplacian $Q(H)$ as:
$$
Q(H)=\left[
        \begin{array}{cc}
           Q(G)+D_{1}& K\\
           K^{T}& D_{2}
        \end{array}
    \right],
$$
where $Q(G)$ is the signless Laplacian matrix of $G$ and $D_{1},D_{2}$ are diagonal matrices. We shall henceforth write $Q$ instead of $Q(G)$ when in no danger of confusion. 


The diagonal entries of $D_{2}$ are simply the degrees of the vertices in $S$. A diagonal entry of $D_{1}$ records the number of vertices in $S$ that are adjacent to the corresponding vertex of $T$.

Now let $\mu=\mu(H)$ be the smallest eigenvalue of $Q(H)$. Since $H$ is not bipartite, we have $\mu>0$. 

\begin{lem}\label{lem:deg}We have
$\mu<\min_{1 \leq i \leq s}{(D_{2})_{ii}}$
and therefore the matrix $\mu I - D_{2}$ is invertible.
\end{lem}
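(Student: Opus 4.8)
The plan is to sharpen the estimate of Lemma \ref{lem:das}, which only gives $\mu(H) < \delta(H)$: since $\delta(H)$ can be attained at a vertex of $T$, that bound is too weak for the present purpose, so I will localise to $S$ by building a small test configuration at the lowest-degree vertex of $S$.

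First I would pick $j \in S$ with $d_{H}(j) = \min_{1 \le i \le s}(D_{2})_{ii}$, using the fact that the diagonal of $D_{2}$ lists the $H$-degrees of the vertices of $S$. Since $H$ is connected and non-bipartite (so $|V(H)| \ge 3$), the vertex $j$ has a neighbour $k$, which lies in $T$ because $S$ is independent. Into the Rayleigh-quotient formula of Theorem \ref{thm:cf} I would substitute the vector $x$ supported on $\{j,k\}$ with $x_{k} = 1$ and $x_{j} = a$ a scalar to be chosen. Every edge of $H$ avoiding $\{j,k\}$ contributes $0$, the edge $jk$ contributes $(a+1)^{2}$, the other $d_{H}(j)-1$ edges at $j$ contribute $a^{2}$ apiece, and the other $d_{H}(k)-1$ edges at $k$ contribute $1$ apiece, so
$$
\mu(H) \;\le\; \frac{(a+1)^{2} + (d_{H}(j)-1)a^{2} + (d_{H}(k)-1)}{a^{2}+1}.
$$
A one-line simplification reduces the inequality ``right-hand side $< d_{H}(j)$'' to $2a + d_{H}(k) - d_{H}(j) < 0$, which holds for any sufficiently negative $a$ (for instance $a = \tfrac{1}{2}(d_{H}(j) - d_{H}(k)) - 1$). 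Hence $\mu = \mu(H) < d_{H}(j) = \min_{1 \le i \le s}(D_{2})_{ii}$. Equivalently, one could invoke Cauchy interlacing on the $2 \times 2$ principal submatrix of $Q(H)$ on $\{j,k\}$ (diagonal $d_{H}(j), d_{H}(k)$, off-diagonal $1$), whose least eigenvalue $\tfrac{1}{2}\bigl(d_{H}(j)+d_{H}(k)-\sqrt{(d_{H}(j)-d_{H}(k))^{2}+4}\bigr)$ is visibly below $d_{H}(j)$.

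Finally, since $D_{2}$ is diagonal, $\mu I - D_{2}$ is the diagonal matrix with entries $\mu - (D_{2})_{ii} < 0$; having no zero on its diagonal, it is invertible, which is the second assertion. I do not anticipate a genuine obstacle here: the only things to be careful about are not settling for the too-coarse bound $\mu < \delta(H)$, and recording that connectedness produces the neighbour $k$ needed for the test vector (equivalently, for the $2 \times 2$ submatrix) to exist.
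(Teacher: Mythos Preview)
Your argument is correct, but it rests on a misreading of the logical direction. You say Lemma~\ref{lem:das} ``only gives $\mu(H)<\delta(H)$'' and that this is ``too weak'' because $\delta(H)$ may be attained at a vertex of $T$. In fact that is precisely what makes Lemma~\ref{lem:das} \emph{sufficient}: since $\delta(H)=\min_{v\in V(H)}d_{H}(v)\le \min_{v\in S}d_{H}(v)=\min_{i}(D_{2})_{ii}$, one has the chain
\[
\mu(H)<\delta(H)\le \min_{1\le i\le s}(D_{2})_{ii},
\]
which is exactly the claim. This is the paper's one-line proof.

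Your Rayleigh-quotient (equivalently, $2\times 2$ interlacing) computation is a valid, self-contained argument, but it is essentially a localised reproof of Lemma~\ref{lem:das}: applying your $2\times 2$ interlacing at a vertex of minimum degree in $H$ \emph{is} a proof of that lemma. So what you have done is re-derive the cited tool rather than invoke it. Nothing is wrong mathematically; it is just more work than needed, prompted by the mistaken belief that the bound had to be sharpened.
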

\begin{proof}
As noted above, the diagonal entries of $D_{2}$ are the degrees of the vertices in $S$. The conclusion follows immediately from Lemma \ref{lem:das}.
\end{proof}

Let $x$ be an eigenvector corresponding to $\mu$ and let us partition $x$ as $x=[w^{T} z^{T}]^{T}$ conformally with the partition of $Q(H)$ (so we have $w=x(T),z=x(S)$). We now write down the eigenequation:
\begin{equation}\label{eq:x}
\left[
        \begin{array}{cc}
           Q+D_{1}& K\\
           K^{T}& D_{2}
        \end{array}
\right] 
\left[
        \begin{array}{c}
           w\\
           z
        \end{array}
\right]= \mu
\left[
        \begin{array}{c}
           w\\
           z
        \end{array}
\right].
\end{equation}
Multiplying out we have the following system of equations:
\begin{equation}\label{eq:sys1}
(Q+D_{1})w+Kz=\mu w
\end{equation}
\begin{equation}\label{eq:sys2}
K^{T}w+D_{2}z=\mu z
\end{equation}

Lemma \ref{lem:deg} ensures that equation (\ref{eq:sys2}) can be solved as:
\begin{equation}\label{eq:z}
z=(\mu I-D_{2})^{-1}K^{T}w
\end{equation}

We can now give a useful characterization of $S$-Rothness.

\begin{prop}\label{prop:roth}
The graph $H$ is $S$-Roth if and only if for every eigenvector $x$ corresponding to $\mu(H)$ it holds that $x(S)>0$ or $x(S)<0$.
\end{prop}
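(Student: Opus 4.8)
The plan is to prove the two implications of the equivalence separately. The forward one is a direct reading of Definition~\ref{defin:roth}; the reverse one is a short sign argument that comes straight out of the identity~(\ref{eq:z}), which already expresses $z=x(S)$ as a fixed linear image of $w=x(V(H)-S)$.

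For the forward implication, suppose $H$ is $S$-Roth. Then by Definition~\ref{defin:roth} every eigenvector $x$ of $\mu(H)$ satisfies $x(S)>0$ and $x(V(H)-S)<0$, or the reverse; in either case the restriction of $x$ to each of the two parts $S$ and $V(H)-S$ is of a single strict sign, which is exactly what the right-hand side asks for.

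For the reverse implication, let $x=[w^{T}\,z^{T}]^{T}$ be an arbitrary eigenvector of $\mu(H)$, with $w=x(V(H)-S)$ and $z=x(S)$, and recall from~(\ref{eq:z}) that $z=(\mu I-D_{2})^{-1}K^{T}w$. By hypothesis $w$ is of a single strict sign; replacing $x$ by $-x$ if necessary, assume $w<0$. I would now read off the signs. By Lemma~\ref{lem:deg}, $\mu I-D_{2}$ is a diagonal matrix with strictly negative diagonal, so $(\mu I-D_{2})^{-1}$ is diagonal with strictly negative diagonal as well. The entries of $K^{T}$ lie in $\{0,1\}$, and since $S$ is an independent set in the connected graph $H$, every vertex of $S$ has a neighbour, which must lie in $V(H)-S$; hence $K^{T}$ has no zero row, and so $K^{T}w<0$. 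As the product of a diagonal matrix with strictly negative diagonal and a strictly negative vector, $z=(\mu I-D_{2})^{-1}K^{T}w$ is strictly positive. Thus $x(S)>0$ and $x(V(H)-S)<0$, i.e.\ $x$ has the $S$-Roth sign pattern; since $x$ was arbitrary, $H$ is $S$-Roth.

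I do not expect a real obstacle here. With~(\ref{eq:z}) available the whole argument is a matter of tracking signs, and the single point at which the standing hypotheses enter is the claim that $K^{T}$ has no zero row --- equivalently, that every vertex of $S$ has a neighbour outside $S$ --- which is immediate from the connectedness of $H$ together with the (maximal) independence of $S$. One may also note, in keeping with the Remark after Definition~\ref{defin:roth}, that the right-hand condition itself forces $\mu(H)$ to be simple: a $\mu(H)$-eigenspace of dimension at least $2$ has connected unit sphere, hence contains an eigenvector whose restriction to $S$ changes sign.
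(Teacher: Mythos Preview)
Your argument is essentially the paper's: the forward direction is read off from the definition, and the reverse direction uses \eqref{eq:z} together with the facts that $K^{T}$ has no zero row and that $\mu I-D_{2}$ has strictly negative diagonal.

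One slip to flag. In the reverse implication you write ``By hypothesis $w$ is of a single strict sign'', where $w=x(V(H)-S)$. But the proposition, as printed, places the hypothesis on $x(S)=z$, not on $w$. The paper's own proof has exactly the same confusion (it announces ``$w=x(S)$ and $z=x(T)$'', contradicting the convention fixed just above \eqref{eq:x}, and then invokes \eqref{eq:z} anyway), and the form of the proposition actually used in Theorem~\ref{thm:qmu} is the one about $x(T)$: minpositivity of $Q_{\mu}$ speaks to the sign of $w$, not of $z$. So you have proved the version that is intended and used; the appearance of $x(S)$ in the statement is almost certainly a typo for $x(V(H)-S)$. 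If one insists on the literal statement, note that the symmetric route via \eqref{eq:sys1} requires controlling the sign of $(Q+D_{1}-\mu I)^{-1}$, which is not immediate since $Q+D_{1}-\mu I$ has nonnegative off-diagonal entries; the clean argument really does go through $w$.
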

\begin{proof}
One direction is trivial from the definition of an $S$-Roth graph. For the other direction, consider a partition of $x$ as in (\ref{eq:x}), with $w=x(S)$ and $z=x(T)$.
Since $H$ is connected, every vertex in $S$ must have at least one neighbour in $T$. This means that $K^{T}$ has no zero row. Furthermore, the matrix $\mu I-D_{2}$ is diagonal and all its diagonal entries are negative, by Lemma \ref{lem:deg}. 
Therefore, if $w>0$ it follows from (\ref{eq:z}) that $z<0$ and we are done.
\end{proof}

Let us now substitute the expression for $z$ found in (\ref{eq:z}) into (\ref{eq:sys1}):
$$(Q+D_{1})w+K(\mu I-D_{2})^{-1}K^{T}w=\mu w$$

In other words, $(\mu,w)$ is an eigenpair of the following matrix:
\begin{equation}\label{eq:qmu}
Q_{\mu}=Q+D_{1}+K(\mu I-D_{2})^{-1}K^{T}.
\end{equation}

\begin{lem}\label{lem:smallest}
$\mu$ is the smallest eigenvalue of $Q_{\mu}$.
\end{lem}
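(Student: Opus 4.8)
The plan is to show that $\mu$ is the smallest eigenvalue of $Q_\mu$ by a Rayleigh-quotient argument combined with the Schur-complement machinery from Theorem \ref{thm:schur}. First I would observe that $Q_\mu$ is exactly the Schur complement $Q(H)/(D_2 - \mu I)$ up to sign: indeed, from the block form of $Q(H)$ in \eqref{eq:x}, the matrix $(Q(H) - \mu I)/(D_2 - \mu I)$ equals $(Q + D_1 - \mu I) - K(D_2 - \mu I)^{-1}K^T = Q_\mu - \mu I$, and Lemma \ref{lem:deg} guarantees that the bottom-right block $D_2 - \mu I$ is negative definite, hence invertible, so the Schur complement is well defined.

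Next I would argue that $Q_\mu - \mu I$ is positive semidefinite, i.e. $\mu$ is a lower bound for the spectrum of $Q_\mu$. The key point is that $D_2 - \mu I$ is \emph{negative} definite, so $-(D_2 - \mu I)$ is positive definite; a small variant of Theorem \ref{thm:schur} (applied to $\mu I - Q(H)$, whose $(S,S)$ block $\mu I - D_2$ is positive definite) shows that $\mu I - Q(H)$ is \emph{negative} semidefinite on the relevant subspace precisely when its Schur complement $(\mu I - Q(H))/(\mu I - D_2) = \mu I - Q_\mu$ is negative semidefinite. Since $\mu = \mu(H) = \lambda_1(Q(H))$, the matrix $Q(H) - \mu I$ is positive semidefinite, so $\mu I - Q_\mu$ is negative semidefinite, i.e. $Q_\mu \succeq \mu I$. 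More carefully: for any $w$, choosing $z = (\mu I - D_2)^{-1}K^T w$ makes $[w^T\ z^T]$ a vector on which the quadratic form of $Q(H) - \mu I$ restricted in the $z$-direction is minimized, and a direct block computation gives $[w^T\ z^T](Q(H) - \mu I)[w^T\ z^T]^T = w^T(Q_\mu - \mu I)w$; since the left side is $\geq 0$, we conclude $w^T Q_\mu w \geq \mu\, w^T w$ for all $w$.

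Finally I would exhibit that $\mu$ is actually attained: the eigenvector $x = [w^T\ z^T]^T$ of $Q(H)$ for $\mu(H)$ has $z = (\mu I - D_2)^{-1}K^T w$ by \eqref{eq:z}, and we showed above that $(\mu, w)$ is an eigenpair of $Q_\mu$; moreover $w = x(T) \neq 0$ since otherwise \eqref{eq:z} forces $z = 0$ and hence $x = 0$. Thus $\mu$ is an eigenvalue of $Q_\mu$ that is also a lower bound for its spectrum, so it is the smallest eigenvalue. The main obstacle is getting the Schur-complement sign bookkeeping right: Theorem \ref{thm:schur} is stated for positive semidefiniteness with a nonsingular principal submatrix, and here the relevant submatrix $\mu I - D_2$ is positive definite while we are applying it to $\mu I - Q(H)$ rather than $Q(H)$ itself, so I would either carefully restate the needed version or, to be safe, replace the appeal to Theorem \ref{thm:schur} by the explicit completion-of-squares identity $[w^T\ z^T](Q(H) - \mu I)[w^T\ z^T]^T = w^T(Q_\mu - \mu I)w + (z - z_w)^T(D_2 - \mu I)(z - z_w)$ with $z_w = (\mu I - D_2)^{-1}K^T w$, which makes the inequality $w^T Q_\mu w \geq \mu w^T w$ transparent once one recalls $D_2 - \mu I \succ 0$.
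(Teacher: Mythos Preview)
Your approach is the same as the paper's: compute the Schur complement $(Q(H)-\mu I)/(D_2-\mu I)=Q_\mu-\mu I$ and invoke Theorem~\ref{thm:schur} together with the positive semidefiniteness of $Q(H)-\mu I$ to conclude $Q_\mu-\mu I\succeq 0$; the fact that $\mu$ is actually an eigenvalue of $Q_\mu$ was established just before the lemma.

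There is, however, a sign slip that makes the middle of your argument more convoluted than necessary. Lemma~\ref{lem:deg} says $\mu<(D_2)_{ii}$ for every $i$, so $D_2-\mu I$ is \emph{positive} definite, not negative definite as you first write. With that correction, Theorem~\ref{thm:schur} applies verbatim to $M=Q(H)-\mu I$ with principal block $D=D_2-\mu I$: since $M\succeq 0$ and $D\succ 0$, the Schur complement $M/D=Q_\mu-\mu I$ is automatically positive semidefinite. No ``variant'' of the theorem, no passage through $\mu I-Q(H)$, and no completion-of-squares are needed. (Your completion-of-squares identity is correct and, with the right sign $D_2-\mu I\succ 0$, does give the same conclusion; you just contradict yourself between the second paragraph and the last sentence.) Also, the paper does not bother to reverify $w\neq 0$ inside the proof, since the eigenpair $(\mu,w)$ for $Q_\mu$ was already recorded before the lemma.
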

\begin{proof}
Consider the shifted matrix $Q(H)-\mu I$:
$$
Q(H)-\mu I=\left[
        \begin{array}{cc}
           Q+D_{1}-\mu I& K\\
           K^{T}& D_{2}- \mu I
        \end{array}
    \right],
$$
Lemma \ref{lem:deg} enables us to take the Schur complement of $Q(H)-\mu I$ by its bottom-right block:
$$(Q(H)-\mu I)/(D_{2}- \mu I)=Q+D_{1}-\mu I+K(\mu I-D_{2})^{-1}K^{T}=Q_{\mu}- \mu I$$

The matrix $Q(H)-\mu I$ is clearly positive semidefinite and thus by Theorem \ref{thm:schur} the matrix $Q_{\mu}- \mu I$ is positive semidefinite as well. This means that all eigenvalues of $Q_{\mu}$ are greater than or equal to $\mu$.
\end{proof}

We have thus arrived at an alternative characterization of the $S$-Roth property.

\begin{thm}\label{thm:qmu}
$H$ is $S$-Roth if and only if $Q_{\mu}$ is a minpositive matrix.
\end{thm}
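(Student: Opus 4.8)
The plan is to exploit the correspondence, already implicit in the passage from (\ref{eq:x}) to (\ref{eq:qmu}), between the $\mu$-eigenspace of $Q(H)$ and the $\mu$-eigenspace of $Q_{\mu}$, and then to feed it into Proposition \ref{prop:roth}, the Remark that $S$-Rothness forces $\mu$ to be simple, and Lemma \ref{lem:smallest}.

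First I would make the correspondence precise as a \emph{linear bijection of eigenspaces}. Given an eigenvector $x=[w^{T}\ z^{T}]^{T}$ of $Q(H)$ for $\mu$, equation (\ref{eq:sys2}) forces $z=(\mu I-D_{2})^{-1}K^{T}w$ as in (\ref{eq:z}), and then (\ref{eq:sys1}) says exactly $Q_{\mu}w=\mu w$; conversely, starting from any $w$ with $Q_{\mu}w=\mu w$ and \emph{defining} $z:=(\mu I-D_{2})^{-1}K^{T}w$, a one-line substitution verifies (\ref{eq:sys1}) and (\ref{eq:sys2}), so that $x=[w^{T}\ z^{T}]^{T}$ is a $\mu$-eigenvector of $Q(H)$. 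Since $w=0$ forces $z=0$ and vice versa, the maps $x\mapsto x(T)$ and $w\mapsto x$ are mutually inverse linear isomorphisms between the two eigenspaces; in particular $\mu$ is a simple eigenvalue of $Q(H)$ if and only if it is a simple eigenvalue of $Q_{\mu}$, which by Lemma \ref{lem:smallest} means $\mu=\lambda_{1}(Q_{\mu})$ is simple.

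Next I would track signs across this bijection. Since $H$ is connected and $S$ is independent, every vertex of $S$ has a neighbour in $T$, so $K^{T}$ has no zero row; hence $w>0$ forces $K^{T}w>0$, and as $\mu I-D_{2}$ is a negative diagonal matrix by Lemma \ref{lem:deg}, it follows that $x(S)=z=(\mu I-D_{2})^{-1}K^{T}w<0$ (and symmetrically for $w<0$). With this in hand the two implications are short. If $Q_{\mu}$ is minpositive, then $\lambda_{1}(Q_{\mu})=\mu$ is simple with a positive eigenvector $w$, so the $\mu$-eigenspace of $Q(H)$ is spanned by the corresponding $x$ with $x(S)<0$; every $\mu$-eigenvector is then a scalar multiple of $x$, so its restriction to $S$ is strictly positive or strictly negative, and Proposition \ref{prop:roth} gives that $H$ is $S$-Roth. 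Conversely, if $H$ is $S$-Roth, then $\mu$ is simple by the Remark, and by Definition \ref{defin:roth} the essentially unique $\mu$-eigenvector $x$ may, after replacing $x$ by $-x$ if necessary, be taken with $x(S)>0$ and $x(V(H)-S)<0$; then $w=x(T)<0$ spans $\ker(Q_{\mu}-\mu I)$, so $-w>0$ is a positive eigenvector of $\lambda_{1}(Q_{\mu})=\mu$, which with simplicity says $Q_{\mu}$ is minpositive.

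I do not expect a serious obstacle: the crux — that a $\mu$-eigenvector of $Q(H)$ is determined by and reconstructible from its $T$-part — is essentially the computation already performed to arrive at $Q_{\mu}$. The two points that want care are, first, insisting on a genuine \emph{bijection} of eigenspaces so that simplicity passes in both directions, and second, remembering that ``minpositive'' packages together both the simplicity of $\lambda_{1}$ and the existence of a strictly positive eigenvector, so that both must be produced in the ``only if'' direction and both are available in the ``if'' direction.
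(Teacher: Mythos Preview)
Your proof is correct and follows essentially the same route as the paper: the paper's proof is the one-liner ``Immediate from Lemma \ref{lem:smallest} and Proposition \ref{prop:roth}'', and what you have written is precisely the unpacking of that line, making explicit the eigenspace bijection $x\leftrightarrow w$ and the transfer of simplicity that the paper leaves implicit. Your careful handling of both the simplicity and the positivity ingredients of ``minpositive'' in each direction is exactly what is needed and is a welcome clarification of the paper's terse argument.
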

\begin{proof}
Immediate from Lemma \ref{lem:smallest} and Proposition \ref{prop:roth}.
\end{proof}



\section{A combinatorial sufficient condition for $S$-Rothness}\label{sec:gamma}

We would now like to formulate simple combinatorial conditions on $B_{H,S}$ and $G_{H,T}$ that will ensure that the graph $H$ is $S$-Roth, drawing upon Theorem \ref{thm:qmu}. We continue to assume in this section that the vertices of $H$ are sorted so that those of $T$ come first, then those of $S$. Note also that by convention an empty sum equals zero.

\begin{prop}\label{prop:quadform}
Let $i,j$ be distinct indices in $\{1,2,\ldots,t\}$ and let $N_{i,j}=\{k \in S|k \sim_{B} i, k \sim_{B} j\}$ be the set of their common neighbours in $S$. Then:
$$(K(D_{2}-\mu I)^{-1}K^{T})_{ij}=\sum_{k \in N_{ij}}{\frac{1}{d_{B}(k)-\mu}}.$$
\end{prop}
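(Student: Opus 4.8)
The plan is to unwind the definitions of the blocks $K$ and $D_{2}$ and carry out the matrix multiplication entrywise; the identity is then pure bookkeeping. Recall that, with the vertices of $T$ listed first and those of $S$ last, $K$ is the $t\times s$ submatrix of the adjacency matrix $A(H)$ recording adjacencies between $T$ and $S$. Since $S$ is independent, $K$ coincides with the biadjacency matrix of $B_{H,S}$: we have $K_{ik}=1$ when the vertex $i\in T$ is adjacent (in $H$, equivalently in $B$) to the vertex $k\in S$, and $K_{ik}=0$ otherwise. Likewise, because every vertex of $S$ has all of its neighbours in $T$, the $k$th diagonal entry of $D_{2}$ equals $d_{H}(k)=d_{B}(k)$; hence $D_{2}-\mu I$ is the diagonal matrix with entries $d_{B}(k)-\mu$, which are strictly positive by Lemma \ref{lem:deg}, so its inverse is the diagonal matrix with entries $(d_{B}(k)-\mu)^{-1}$.

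First I would write out the $(i,j)$ entry of the triple product, using that the middle factor is diagonal:
$$
(K(D_{2}-\mu I)^{-1}K^{T})_{ij}=\sum_{k=1}^{s}K_{ik}\,\frac{1}{d_{B}(k)-\mu}\,(K^{T})_{kj}=\sum_{k\in S}\frac{K_{ik}K_{jk}}{d_{B}(k)-\mu}.
$$
Then I would observe that $K_{ik}K_{jk}=1$ exactly when $k$ is adjacent in $B$ to both $i$ and $j$ --- that is, when $k\in N_{ij}$ --- and $K_{ik}K_{jk}=0$ otherwise. Substituting this into the sum collapses it to $\sum_{k\in N_{ij}}(d_{B}(k)-\mu)^{-1}$, which is the claimed identity; the stated convention that an empty sum is $0$ takes care of the case $N_{ij}=\emptyset$.

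There is no real obstacle here: the only facts that must be invoked are that $S$ is independent --- so that $K$ is literally the $0/1$ biadjacency matrix of $B_{H,S}$ and the diagonal of $D_{2}$ records $B$-degrees, giving $d_{H}(k)=d_{B}(k)$ for $k\in S$ --- together with Lemma \ref{lem:deg}, which guarantees that $(D_{2}-\mu I)^{-1}$ exists and that each summand is well defined. The only point requiring a little care is keeping the indexing of $K$ versus $K^{T}$ straight in the product above.
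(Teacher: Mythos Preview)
Your proof is correct and follows exactly the approach of the paper, which simply notes that the identity is immediate once one observes that the diagonal entries of $D_{2}$ are the $B$-degrees of the vertices in $S$. Your version just spells out the entrywise computation that the paper leaves implicit.
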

\begin{proof}
Immediate, upon observing that the diagonal entries of $D_{2}$ are the degrees of the vertices in $S$. 
\end{proof}

We can now state the main result of this section. Recall that a real matrix is called a $Z$-matrix if all of its offdiagonal entries are nonpositive. 

\begin{thm}\label{thm:harmcond}
Let $H$ be a connected non-bipartite graph with a maximal independent set $S \subseteq V(H)$. Suppose that
\begin{itemize}
\item
For all $ij \in E(G)$: $$\sum_{k \in N_{ij}}{\frac{1}{d_{B}(k)}} \geq 1$$ 
and that
\item 
For all $ij \notin E(G)$: $$N_{ij} \neq \emptyset.$$
\end{itemize}
Then $H$ is $S$-Roth.
\end{thm}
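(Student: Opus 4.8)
By Theorem \ref{thm:qmu}, it suffices to show that under the two stated hypotheses the matrix $Q_{\mu}=Q+D_{1}+K(\mu I-D_{2})^{-1}K^{T}$ is minpositive, i.e. that its smallest eigenvalue $\mu$ (which by Lemma \ref{lem:smallest} is indeed the smallest eigenvalue) is simple and has a positive eigenvector. The plan is to first rewrite $Q_{\mu}$ in a more transparent form. Note that $Q+D_{1}$ has diagonal entries $d_{G}(i)+d_{B}(i)=d_{H}(i)$ and off-diagonal entries equal to $1$ exactly when $ij\in E(G)$; meanwhile $\mu I-D_{2}$ is negative definite, so writing $R:=K(D_{2}-\mu I)^{-1}K^{T}$ we have $Q_{\mu}=Q+D_{1}-R$, with $R$ a symmetric matrix whose entries are given explicitly by Proposition \ref{prop:quadform}: for $i\neq j$, $R_{ij}=\sum_{k\in N_{ij}}\frac{1}{d_{B}(k)-\mu}$, and $R_{ii}=\sum_{k\in S,\ k\sim_{B}i}\frac{1}{d_{B}(k)-\mu}$.

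The key step is to show that $Q_{\mu}$ is an \emph{irreducible $Z$-matrix}, since by the remark following the definition of minpositive matrices (irreducible $Z$-matrices are minpositive) this immediately gives the conclusion. Irreducibility should follow from connectedness of $H$ together with the second hypothesis: the off-diagonal entry $(Q_{\mu})_{ij}$ for $i\neq j$ in $T$ is nonzero whenever $ij\in E(G)$ (coming from $Q$) or $N_{ij}\neq\emptyset$ (coming from $-R$); the first hypothesis forces the latter to be genuinely negative (not accidentally cancelling), and the second hypothesis guarantees that even non-adjacent pairs in $G$ are linked through a common $S$-neighbour, so the underlying graph of $Q_{\mu}$ is connected — in fact one should check it is connected regardless, using that $B_{H,S}$ is connected since $H$ is. For the $Z$-matrix property I must verify every off-diagonal entry is $\leq 0$. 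Since $d_{B}(k)-\mu>0$ for all $k$ by Lemma \ref{lem:deg} (as $d_{B}(k)\geq$ the degree restricted... actually $d_B(k)=d_H(k)=(D_2)_{kk}>\mu$), each term $\frac{1}{d_{B}(k)-\mu}$ is positive, so $R_{ij}\geq 0$ and hence $(Q_{\mu})_{ij}=Q_{ij}-R_{ij}$. When $ij\notin E(G)$ this is $-R_{ij}\leq 0$ automatically (and strictly negative by the second hypothesis). When $ij\in E(G)$ we have $(Q_{\mu})_{ij}=1-\sum_{k\in N_{ij}}\frac{1}{d_{B}(k)-\mu}$, and here the first hypothesis is used: since $\mu>0$ we have $\frac{1}{d_{B}(k)-\mu}\geq\frac{1}{d_{B}(k)}$, so $\sum_{k\in N_{ij}}\frac{1}{d_{B}(k)-\mu}\geq\sum_{k\in N_{ij}}\frac{1}{d_{B}(k)}\geq 1$, giving $(Q_{\mu})_{ij}\leq 0$.

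With these two facts in hand — $Q_{\mu}$ is a $Z$-matrix and it is irreducible — minpositivity follows from the cited characterization (equivalently, one applies the Perron–Frobenius theorem to the nonnegative irreducible matrix $cI-Q_{\mu}$ for large $c$, whose Perron eigenvector is positive and corresponds to the smallest, simple eigenvalue of $Q_{\mu}$), and then Theorem \ref{thm:qmu} yields that $H$ is $S$-Roth.

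I expect the main obstacle to be the careful verification of irreducibility of $Q_{\mu}$: one needs to argue that the graph on $T$ whose edges are the pairs $\{i,j\}$ with $(Q_{\mu})_{ij}\neq 0$ is connected. The safest route is to show that the pairs with $N_{ij}\neq\emptyset$ already form a connected graph on $T$, which should be equivalent to connectedness of $B_{H,S}$ (two vertices of $T$ at distance $2$ in $B$ share an $S$-neighbour, and a path in $B$ between arbitrary vertices of $T$ alternates between $T$ and $S$, so contracting the $S$-steps gives a walk in the $N_{ij}$-graph) — here one uses that $S$ is a maximal independent set and $H$ is connected, so $B_{H,S}$ is connected with no isolated vertices. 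A secondary subtlety worth stating explicitly is why $d_{B}(k)-\mu>0$: this is exactly Lemma \ref{lem:deg}, since $d_{B}(k)=d_{H}(k)=(D_{2})_{kk}$ for $k\in S$ (as $S$ is independent, the neighbours of $k$ all lie in $T$, so its degree in $B_{H,S}$ equals its degree in $H$).
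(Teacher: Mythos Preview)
Your proof is correct and follows essentially the same route as the paper: compute the off-diagonal entries of $Q_{\mu}$ via Proposition~\ref{prop:quadform}, verify they are nonpositive using the two hypotheses, and then invoke Theorem~\ref{thm:qmu}.

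One simplification you are missing, which is why the paper's proof is a single short paragraph and why your worry about irreducibility is unnecessary: since $H$ is non-bipartite we have $\mu>0$ \emph{strictly}, so $\frac{1}{d_{B}(k)-\mu}>\frac{1}{d_{B}(k)}$ strictly for every $k\in S$. Hence for $ij\in E(G)$ the first hypothesis gives $\sum_{k\in N_{ij}}\frac{1}{d_{B}(k)-\mu}>\sum_{k\in N_{ij}}\frac{1}{d_{B}(k)}\geq 1$, and for $ij\notin E(G)$ the second hypothesis gives a strictly positive sum. Thus \emph{every} off-diagonal entry of $Q_{\mu}$ is strictly negative, the underlying graph of $Q_{\mu}$ is the complete graph on $T$, and irreducibility is automatic. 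Your separate connectivity argument via $B_{H,S}$ is not needed.
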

\begin{proof}
Consider the $ij$th off-diagonal entry of $Q_{\mu}$: by Equation (\ref{eq:qmu}) and Proposition \ref{prop:quadform} it is equal to $1-\sum_{k \in N_{ij}}{\frac{1}{d_{B}(k)-\mu}}$ if $ij \in E(G)$ and to $-\sum_{k \in N_{ij}}{\frac{1}{d_{B}(k)-\mu}}$ if $ij \notin E(G)$. Therefore, by our assumptions (and the fact that $\mu>0$), it is negative in both cases. Thus $Q_{\mu}$ is a $Z$-matrix all of whose off-diagonal entries are strictly negative, ergo it is minpositive. We are done by Theorem \ref{thm:qmu}.
\end{proof}

\begin{cor}\label{cor:gc}
Let $c^{B}=\max_{k \in S}{d_{B}(k)}$ and suppose that $|N_{ij}| \geq c^{B}$ for all $ij \in E(G)$ and that $N_{ij} \neq \emptyset$ for all distinct $ij \notin E(G)$. Then $H$ is $S$-Roth.
\end{cor}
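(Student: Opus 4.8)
The plan is to derive Corollary \ref{cor:gc} directly from Theorem \ref{thm:harmcond} by checking that the hypotheses of the corollary imply those of the theorem. There is nothing deep here; the work is in bounding the harmonic-type sum $\sum_{k \in N_{ij}} \frac{1}{d_B(k)}$ from below using only the cardinality of $N_{ij}$ and the maximum $B$-degree $c^B$.

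First I would observe that the second hypothesis of the corollary ($N_{ij} \neq \emptyset$ for all distinct $ij \notin E(G)$) is verbatim the second hypothesis of Theorem \ref{thm:harmcond}, so that condition transfers for free. Then, for the first hypothesis, I would fix an edge $ij \in E(G)$ and estimate term by term: for every $k \in S$ we have $d_B(k) \leq c^B$ by the definition of $c^B$, hence $\frac{1}{d_B(k)} \geq \frac{1}{c^B}$ (note $c^B > 0$ since $H$ is connected, so every vertex of $S$ has at least one neighbour in $T$, and in fact the relevant $k \in N_{ij}$ certainly have positive $B$-degree). Summing this inequality over the $|N_{ij}|$ elements of $N_{ij}$ gives
$$
\sum_{k \in N_{ij}} \frac{1}{d_B(k)} \;\geq\; \frac{|N_{ij}|}{c^B} \;\geq\; \frac{c^B}{c^B} \;=\; 1,
$$
where the last inequality is exactly the assumption $|N_{ij}| \geq c^B$. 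Thus the first hypothesis of Theorem \ref{thm:harmcond} holds as well, and the conclusion that $H$ is $S$-Roth follows immediately by invoking that theorem.

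I do not anticipate a genuine obstacle; the only point requiring a moment's care is making sure the denominators are positive so that the inequality $\frac{1}{d_B(k)} \geq \frac{1}{c^B}$ is valid and the division by $c^B$ is legitimate. This is guaranteed because $c^B = \max_{k \in S} d_B(k) \geq 1$: connectivity of $H$ forces each vertex in the nonempty set $S$ to have a neighbour in $T = V(H) - S$, so its degree in the bipartite subgraph $B = B_{H,S}$ is at least one. With that remark in place the proof is a two-line computation.

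\begin{proof}
The second hypothesis of the corollary is identical to the second hypothesis of Theorem \ref{thm:harmcond}. For the first, note first that $c^{B} \geq 1$: since $H$ is connected and $S$ is nonempty, every vertex $k \in S$ has a neighbour in $T$, so $d_{B}(k) \geq 1$. Now let $ij \in E(G)$. For each $k \in N_{ij}$ we have $0 < d_{B}(k) \leq c^{B}$, hence $\frac{1}{d_{B}(k)} \geq \frac{1}{c^{B}}$. Summing over the $|N_{ij}|$ elements of $N_{ij}$ and using $|N_{ij}| \geq c^{B}$ gives
$$
\sum_{k \in N_{ij}} \frac{1}{d_{B}(k)} \;\geq\; \frac{|N_{ij}|}{c^{B}} \;\geq\; 1 .
$$
Thus both hypotheses of Theorem \ref{thm:harmcond} are satisfied, and therefore $H$ is $S$-Roth.
\end{proof}
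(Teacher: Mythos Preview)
Your proof is correct and follows exactly the same approach as the paper: bound each term $\frac{1}{d_B(k)}$ below by $\frac{1}{c^B}$ and sum over $N_{ij}$ to verify the first hypothesis of Theorem \ref{thm:harmcond}, noting that the second hypothesis carries over verbatim. The paper's version is a single line; your added remark that $c^B \geq 1$ (so the division is legitimate) is a minor elaboration, not a different argument.
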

\begin{proof}
The condition $|N_{ij}| \geq c^{B}$ clearly implies that $\sum_{k \in N_{ij}}{\frac{1}{d_{B}(k)}} \geq |N_{ij}|\frac{1}{c^{B}} \geq 1$.
\end{proof}

\begin{cor}\label{cor:bdeg}
If $d_{B}(i) \geq \frac{t+s}{2}$ for every vertex $i$ in $T$, then $H$ is $S$-Roth.
\end{cor}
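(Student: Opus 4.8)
The plan is to deduce Corollary~\ref{cor:bdeg} from Corollary~\ref{cor:gc}, so the task reduces to verifying the two hypotheses of Corollary~\ref{cor:gc} from the single assumption that $d_B(i) \ge \frac{t+s}{2}$ for every $i \in T$. Throughout, set $s = |S|$ and $t = |T|$. First I would observe that every vertex $k \in S$ has $d_B(k) = d_H(k) \le t$, since $S$ is independent and the neighbours of $k$ all lie in $T$; hence $c^B = \max_{k \in S} d_B(k) \le t$.

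The heart of the matter is a counting argument for $|N_{ij}|$, the number of common neighbours in $S$ of two distinct vertices $i,j \in T$. Each of $i$ and $j$ has at least $\frac{t+s}{2}$ neighbours, all lying in $S$ (again because $S$ is independent in $B_{H,S}$, so edges from $T$ in $B$ go only to $S$). By inclusion-exclusion applied to these two neighbourhoods inside the set $S$ of size $s$,
$$
|N_{ij}| \ge d_B(i) + d_B(j) - s \ge \frac{t+s}{2} + \frac{t+s}{2} - s = t.
$$
So $|N_{ij}| \ge t \ge c^B$ for \emph{every} pair of distinct $i,j \in T$ — in particular for all $ij \in E(G)$, giving the first hypothesis of Corollary~\ref{cor:gc}, and also $|N_{ij}| \ge t \ge 1$ for all $ij \notin E(G)$, so $N_{ij} \ne \emptyset$, giving the second. (One should note $t \ge 1$: since $H$ is connected and $S$ is independent, $T$ is nonempty whenever $s \ge 1$; and if $t = 0$ the statement is vacuous.) Applying Corollary~\ref{cor:gc} then yields that $H$ is $S$-Roth.

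I expect the only genuine subtlety to be making sure the degree bookkeeping is done in the right graph: the condition is stated for $d_B(i)$ with $i \in T$, and one must use that in $B_{H,S}$ the set $T$ carries no internal edges, so $N_{B_{H,S}}(i) \subseteq S$ and $d_B(i) = |N_{B_{H,S}}(i) \cap S|$; this is what licenses the inclusion-exclusion step inside $S$. Everything else is a one-line chain of inequalities, and no appeal to $\mu$ or to the spectral machinery of Section~\ref{sec:gamma} is needed beyond what is already packaged in Corollary~\ref{cor:gc}.
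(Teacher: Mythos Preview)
Your proof is correct and follows essentially the same approach as the paper: the paper's proof simply says ``A simple counting argument shows that in this case $|N_{ij}| \geq t$ for all $i,j$. On the other hand, clearly $c^{B} \leq t$,'' and then implicitly invokes Corollary~\ref{cor:gc}. Your inclusion-exclusion computation $|N_{ij}| \ge d_B(i) + d_B(j) - s \ge t$ is exactly the counting argument the paper has in mind, and your observation that $c^B \le t$ (since neighbours of $S$-vertices lie in $T$) matches the paper's ``clearly.''
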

\begin{proof}
A simple counting argument shows that in this case $|N_{ij}| \geq t$ for all $i,j$. On the other hand, clearly $c^{B} \leq t$.
\end{proof}


\begin{cor}\label{cor:st}
Supppose that $B=K_{s,t}$. If $s \geq t$, then $H$ is $S$-Roth.
\end{cor}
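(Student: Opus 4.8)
\textbf{Proof proposal for Corollary \ref{cor:st}.}

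The plan is to derive this as a direct consequence of Corollary \ref{cor:bdeg}. When $B = K_{s,t}$, every vertex $i \in T$ is adjacent in $B$ to all $s$ vertices of $S$, so $d_B(i) = s$ for every $i \in T$. To invoke Corollary \ref{cor:bdeg} we need $d_B(i) \geq \frac{t+s}{2}$ for every such $i$, i.e. $s \geq \frac{t+s}{2}$, which is equivalent to $s \geq t$. Thus the hypothesis $s \geq t$ is exactly what is needed, and the conclusion that $H$ is $S$-Roth follows immediately.

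One point that deserves a word is that Corollary \ref{cor:bdeg} (and behind it Theorem \ref{thm:harmcond}) is stated for $H$ connected and non-bipartite with $S$ a maximal independent set, so strictly speaking the statement of Corollary \ref{cor:st} should be read under those standing hypotheses of Section \ref{sec:basic}; in particular $G = G_{H,T}$ has at least one edge, for otherwise $H$ would be bipartite. This does not affect the argument: the counting step underlying Corollary \ref{cor:bdeg} shows $|N_{ij}| \geq s + d_B(i) + d_B(j) - |S| - \dots$; more simply, when $B = K_{s,t}$ we have $N_{ij} = S$ for every pair $i,j \in T$, so $|N_{ij}| = s \geq t \geq c^B$ and $N_{ij} \neq \emptyset$, and both hypotheses of Corollary \ref{cor:gc} hold with room to spare.

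I do not anticipate any real obstacle here; the only thing to be careful about is bookkeeping with which side is called $S$ and which $T$, and the implicit convention that $B_{H,S}$ retains all edges of $H$ incident to $S$ while discarding the edges inside $T$ — so that $B = K_{s,t}$ is a genuine restriction on $H$ (it says every vertex of $T$ sees every vertex of $S$) rather than an automatic consequence of $S$ being independent. Given that, the corollary is a one-line specialization.
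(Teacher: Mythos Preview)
Your argument is correct and essentially matches the paper's own proof. The paper applies Corollary~\ref{cor:gc} directly, noting that for $B=K_{s,t}$ one has $|N_{ij}|=s$ for all $i,j\in T$ and $c^{B}=t$, so the hypothesis $s\ge t$ gives the result; your second paragraph reproduces exactly this reasoning, while your first paragraph reaches the same conclusion one step more indirectly via Corollary~\ref{cor:bdeg}.
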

\begin{proof}
In this case $|N_{ij}|=s$ for all $i,j$ and $c^{B}=t$.
\end{proof}

\begin{rmrk}
The difference between Theorem \ref{thm:harmcond} and Corollary \ref{cor:gc} is that the theorem posits a more refined ``local'' condition, whereas the corollary operates via a cruder ``global'' condition.
\end{rmrk}


Note that when $B=K_{s,t}$ it does not matter in which way $G$ is ``glued'' to $B$, and $S$-Rothness depends only on $G$ in itself. This may not always true for other fixed graphs $B$, of course. See Remark \#1 of Section \ref{sec:open} for a further discussion of this issue.

\section{An example}\label{sec:yeast}
The graph in Figure \ref{fig:ex1} appears in \cite{Protein} as an example of a nearly-complete almost-bipartite subgraph found in the Uetz network by the algorithm proposed in \cite{Protein}.

\begin{figure}[here]
\includegraphics[height=8cm,width=10cm]{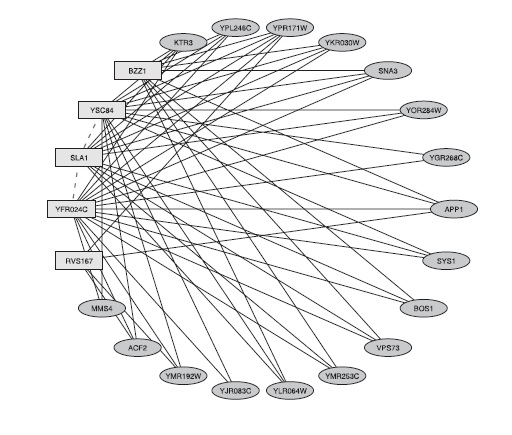}
\caption{A protein graph - drawing reproduced from \cite[Figure 6(a)]{Protein}}\label{fig:ex1}
\end{figure}

This graph has a total of $22$ vertices, and as Figure \ref{fig:ex1} makes abundantly clear we can view it as a noisy version of a $K_{5,17}$. We will now show that Corollary \ref{cor:gc} predicts that the smallest eigenvector of $Q(H)$ will be positive on the five ``rectangular" vertices and negative on the other seventeen ``elliptical" vertices. Indeed, we have $s=17,t=5$ and $c^{B}=5$. Since the graph $G$ in this case has only two edges (SLA1 $\sim$ YSC84 and SLA1 $\sim$ YFR024C) we have to check $N_{ij}$ only for them. Visual inspection shows that these two pairs have $8$ and $10$ common neighbours in $S$, respectively. Therefore the condition of Corollary \ref{cor:gc} is met.

Our example shows that the examination of the smallest signless Laplacian eigenvector suffices to correctly identify the underlying structure present in the graph. Of course, this is just one example and others should be examined in the future to establish the practical applicability of the method.

\section{A case study}\label{sec:case}
In order to better understand Theorem \ref{thm:harmcond} and its scope of applicability we examined three special cases $s \in \{5,7,9\},t=4$ with $G=K_{4}$ for all possible \emph{connected} bipartite graphs $B$. This has been made possible by using data made publicly available by Gordon Royle \cite{RoyleBipData}.

Each row of Table \ref{tab:royle} records information about one value of $s$. The six columns of the table enumerate the following: the value of $s$, the total number of connected bipartite graphs, the number of $S$-Roth graphs amongst them, the number of graphs that satisfy the conditions of Theorem \ref{thm:harmcond}, the number of graphs such that their $Q_{\mu}$ is an $M$-matrix, and the number of graphs such that $Q_{\mu}^{-1}$ is entrywise positive.

\begin{table}[here]
\caption{$S$-Rothness for small bipartite graphs}
\centering 
\begin{tabular}{c c c c c c} 
\hline\hline 
s\ & \# graphs\ & $S$-Roth\ & Theorem \ref{thm:harmcond}\ & $Q_{\mu}$ is an $M$-matrix & $Q_{\mu}^{-1}>0$\ \\ [0.5ex] 
\hline 
5 & 558 & 64 & 4 & 23 & 35 \\
7 & 5375 & 823 & 85 & 283 & 515 \\
9 & 36677 & 8403 & 1234 & 3155 & 6054 \\
\hline 
\end{tabular}
\label{tab:royle} 
\end{table}

Some observations from Table \ref{tab:royle}: Theorem \ref{thm:harmcond} becomes more powerful as $s$ increases with respect to $t$. On the other hand, we see that there are many cases when $Q_{\mu}$ is an $M$-matrix which are not accounted for by Theorem \ref{thm:harmcond}; a possible way to extend its coverage would be by incorporating into the argument some lower bounds on $\mu(H)$. These are, however, notoriously hard to come by.

It is worthwhile to point out thay by choosing $G$ to be a clique we are, so to say, taxing to the utmost Theorem \ref{thm:harmcond}. Note that our concepts make sense for disconnected $B$ as well but we have chosen to omit them from this study, hopefully with little loss. 

Let us now consider some examples to illustrate the possibilities.
All examples will be  drawn from the case $s=7,t=4,G=K_{4}$.
\begin{expl}
Let $B$ be graph \# 4530 in Royle's catalogue. In this case the matrix $K$ is given by:

$$K=
\left(\begin{array}{ccccccc}
1&1&1&1&0&0&0\\
1&1&1&1&0&0&0\\
1&1&1&1&0&0&0\\
1&1&1&1&1&1&1\\
\end{array}
\right).
$$

The degrees $d_{B}(1),d_{B}(2),\ldots,d_{B}(7)$ are  $4,4,4,4,1,1,1$ (read off as the column sums of $K$). The condition of Theorem \ref{thm:harmcond} is met as $N_{ij}=\{1,2,3,4\}$ for all $i,j$ and therefore $\sum_{k \in N_{ij}}{\frac{1}{d_{B}(k)}}=\frac{1}{4}+\frac{1}{4}+\frac{1}{4}+\frac{1}{4}=1$.

And indeed, the matrix $Q_{\mu}$ can be computed as:

$$
Q_{\mu}=\left(\begin{array}{cccc}
5.8123&-0.18774&-0.18774&-0.18774\\
-0.18774&5.8123&-0.18774&-0.18774\\
-0.18774&-0.18774&5.8123&-0.18774\\
-0.18774&-0.18774&-0.18774&0.65427\\
\end{array}
\right), \mu=0.63226.
$$

The eigenvector $x$ is:

$$
x=
\left(\begin{array}{ccccccccccc}
0.008\\
0.008\\
0.008\\
0.2057\\
-0.0682\\
-0.0682\\
-0.0682\\
-0.0682\\
-0.5594\\
-0.5594\\
-0.5594\\
\end{array}
\right).
$$
\end{expl}

\begin{expl}
Now consider an example where $Q_{\mu}$ is an $M$-matrix but the condition of Theorem \ref{thm:harmcond} is not met. Let $G$ be graph \# 5104 in Royle's catalogue. We have

$$
K=
\left(\begin{array}{ccccccc}
1&1&1&1&1&0&0\\
1&1&1&0&0&1&1\\
1&1&0&1&1&1&0\\
1&1&1&1&0&1&0\\
\end{array}
\right), 
$$
and
$$
[d_{B}(1),d_{B}(2),\ldots,d_{B}(7)]=[4,4,3,3,2,3,1].
$$

We observe that $N_{12}=\{1,2,3\}$ and therefore $\sum_{k \in N_{12}}{\frac{1}{d_{B}(k)}}=\frac{1}{4}+\frac{1}{4}+\frac{1}{3}=\frac{5}{6}<1$. However, 

$$
Q_{\mu}=\left(\begin{array}{cccc}
5.6058&-0.08776&-0.93542&-0.54653\\
-0.08776&0.88934&-0.08776&-0.54653\\
-0.93542&-0.08776&5.6058&-0.54653\\
-0.54653&-0.54653&-0.54653&5.9947\\
\end{array}
\right)
$$

is an $M$-matrix ($\mu=0.82028$).

\end{expl}

\begin{expl}
Consider now $B$ which is graph \# 3503 in Royle's catalogue.

$$
K=
\left(\begin{array}{ccccccc}
1&1&1&0&0&0&0\\
1&0&0&1&1&0&0\\
1&0&0&0&0&1&1\\
1&1&1&1&1&1&1\\\end{array}
\right), 
$$
$$
[d_{B}(1),d_{B}(2),\ldots,d_{B}(7)]=[4,2,2,2,2,2,2].
$$

Now $\sum_{k \in N_{ij}}{\frac{1}{d_{B}(k)}}=\frac{1}{4}$  for distinct $i,j$ in $\{1,2,3\}$ and $\sum_{k \in N_{i4}}{\frac{1}{d_{B}(k)}}=1 \frac{1}{4}$ for $i=1,2,3$. Although $Q_{\mu}$ has many positive entries, its inverse is nevertheless positive and therefore $Q_{\mu}$ is minpositive. In this case $\mu=1.0922$. In fact,

$$
Q_{\mu}=\left(\begin{array}{cccc}
3.453&0.6561&0.6561&-1.547\\
0.6561&3.453&0.6561&-1.547\\
0.6561&0.6561&3.453&-1.547\\
-1.547&-1.547&-1.547&3.0468\\
\end{array}
\right),
$$

$$
Q_{\mu}^{-1}=\left(\begin{array}{cccc}
0.37674&0.019201&0.019201&0.21078\\
0.019201&0.37674&0.019201&0.21078\\
0.019201&0.019201&0.37674&0.21078\\
0.21078&0.21078&0.21078&0.64927\\
\end{array}
\right),
$$


\end{expl}

\begin{expl}
$B$ is now graph \# 1447 in Royle's catalogue. Then

$$
K=
\left(\begin{array}{ccccccc}
1&1&0&0&0&0&0\\
0&0&1&1&0&0&0\\
1&0&1&0&1&0&0\\
1&1&1&1&1&1&1\\
\end{array}
\right), 
$$
and
$$
[d_{B}(1),d_{B}(2),\ldots,d_{B}(7)]=[3,2,3,2,2,1,1].
$$

Since $N_{12}=\emptyset$ we cannot even try to apply Theorem \ref{thm:harmcond}. Also, $Q_{\mu}$ is not monotone. Nevertheless, it can be found by computation that $w>0$. Indeed, we have in this case:
$$
Q_{\mu}=\left(\begin{array}{cccc}
3.8172&1&0.57038&-0.18282\\
1&3.8172&0.57038&-0.18282\\
0.57038&0.57038&4.3876&-0.61244\\
-0.18282&-0.18282&-0.61244&0.77727\\
\end{array}
\right),
$$

$$
w=
\left(\begin{array}{cccc}
0.0047565\\
0.0047565\\
0.033593\\
0.21264\\
\end{array}
\right),  \mu=0.67234.
$$

We remark that $Q_{\mu}^{-6}>0$ in this case but there seems to be no easy combinatorial interpretation of this fact.

\end{expl}

\section{The graphs $H=\overline{K}_{s} \vee G$ for $s<t$, Part I}\label{sec:kst}

Note that if $B_{H,S}=K_{s,t}$, then $H=\overline{K}_{s} \vee G$. From this point on we shall consider only graphs of this form.  When we say that $H$ is $S$-Roth we shall invariably mean that $S$ is the set of vertices inducing the $\overline{K}_{s}$.

Since $B=K_{s,t}$, we have now $K=J_{t,s},D_{1}=sI_{t}$, and $D_{2}=tI_{s}$. Therefore the definition of the matrix $Q_{\mu}$ in Equation (\ref{eq:qmu}) simplifies to:
\begin{equation}\label{eq:qmu_kst}
Q_{\mu}=Q+sI-\frac{s}{t-\mu}J.
\end{equation}

\begin{defin}
Let $H=\overline{K}_{s} \vee G$. Let $\mu=\mu(H)$ be the smallest signless Laplacian eigenvalue of $H$. Define the quantity $\alpha_{H}(G)$ as:
$$\alpha_{H}(G)=\frac{s}{t-\mu}.$$
\end{defin}


\begin{lem}\label{lem:alpha}
If $t>s$ and $\alpha_{H}(G)>1$, then $H$ is $S$-Roth.
\end{lem}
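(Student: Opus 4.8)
The goal is to show that when $t>s$ and $\alpha_H(G)=\frac{s}{t-\mu}>1$, the graph $H=\overline{K}_s \vee G$ is $S$-Roth. By Theorem \ref{thm:qmu} it suffices to show that $Q_\mu = Q + sI - \alpha_H(G)J$ is minpositive, i.e. its smallest eigenvalue (which we know equals $\mu$ by Lemma \ref{lem:smallest}) is simple and has a positive eigenvector. My plan is to find the eigenvector $w=x(T)$ explicitly, or at least to show it cannot have a zero or negative entry, exploiting the very rigid structure of $Q_\mu$: apart from the rank-one correction $-\alpha_H(G)J$, it is just $Q+sI$, whose off-diagonal entries are the (nonnegative) adjacency entries of $G$.

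**Key steps.** First I would write the eigenequation $Q_\mu w = \mu w$ as $(Q + sI - \mu I)w = \alpha_H(G)(\mathbf{1}^T w)\mathbf{1}$. Note $Q+(s-\mu)I = Q(G) + (s-\mu)I$; since $\mu < \delta(H) \le s + \delta(G)$ is not immediately enough, I would instead argue that $s-\mu$ combined with $Q(G)$ keeps $Q(G)+(s-\mu)I$ invertible — indeed $\mu$ being the \emph{smallest} eigenvalue of $Q_\mu$ and the correction term being negative semidefinite forces $Q+sI-\mu I$ to be positive definite (if it had a nonpositive eigenvalue, subtracting $\alpha_H(G)J\succeq 0$ would push $\lambda_1(Q_\mu)$ strictly below $\mu$, a contradiction). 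Hence $w = \alpha_H(G)(\mathbf{1}^T w)(Q+sI-\mu I)^{-1}\mathbf{1}$, and since $w\neq 0$ we get $\mathbf{1}^T w \neq 0$; rescaling, $w$ is a positive multiple of $(Q+sI-\mu I)^{-1}\mathbf{1}$. So everything reduces to showing $(Q+sI-\mu I)^{-1}\mathbf{1} > 0$, equivalently that the unique solution $u$ of $(Q+sI-\mu I)u = \mathbf{1}$ is positive. Here I would use that $Q(G)+sI-\mu I$ is a positive definite matrix with nonnegative off-diagonal entries; that alone does not give inverse-positivity, so the genuine input must be the hypothesis $\alpha_H(G)>1$. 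I would feed it in through the consistency equation obtained by left-multiplying $w = \alpha_H(G)(\mathbf{1}^T w)(Q+sI-\mu I)^{-1}\mathbf{1}$ by $\mathbf{1}^T$: dividing out $\mathbf{1}^T w$ gives $1 = \alpha_H(G)\,\mathbf{1}^T(Q+sI-\mu I)^{-1}\mathbf{1}$, so $\mathbf{1}^T u = 1/\alpha_H(G) \in (0,1)$. Combined with $t>s$ (which controls $D_2=tI_s$ versus the size of $T$) this pins down the scale enough to run a sign argument — e.g. comparing $u$ with the vector $\frac{1}{s}\mathbf{1}$ that would solve the problem if $G$ were edgeless, and using monotonicity of the resolvent in the perturbation $Q(G)\succeq 0$.

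**Main obstacle.** The crux is establishing positivity of $(Q(G)+sI-\mu I)^{-1}\mathbf{1}$ from $\alpha_H(G)>1$ together with $t>s$. A positive definite matrix with nonnegative off-diagonal part need not be inverse-positive in general, so the argument has to use the specific all-ones right-hand side and the numeric constraint $\mathbf{1}^T u = 1/\alpha_H(G) < 1$. I expect the clean route is: since $Q(G)$ is positive semidefinite with row sums $r_i(Q(G)) = 2 d_G(i)$, one has $(Q(G)+sI-\mu I)\mathbf{1}$ entrywise $\ge (s-\mu)\mathbf{1} + (\text{something} \ge 0)$, while on the other hand $\mu < s$ would need to be verified — and indeed from $\alpha_H(G)=\frac{s}{t-\mu}>1$ we get $\mu > t - s > 0$, but we also need $\mu<s$, which follows since $t>s$ would give $t-\mu < s$ hence $\mu > t-s$, not an upper bound directly; the upper bound $\mu < s$ instead comes from $\mu < \delta(H)$ only when $\delta(G)=0$, so in general I would obtain $\mu < s + \delta(G)$ and absorb the $\delta(G)$ term using that $Q(G)\mathbf{1} \ge 2\delta(G)\mathbf{1}$ is the wrong direction — hence the careful comparison must be with $Q(G)u$ using $u$ itself, not $\mathbf{1}$. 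Concretely: from $(Q(G)+(s-\mu)I)u=\mathbf{1}$ and $Q(G)\succeq 0$, if $u_k=\min_i u_i \le 0$ then $(Q(G)u)_k \le (\text{terms}) $ and evaluating the $k$-th equation gives $(s-\mu)u_k \ge 1 - (Q(G)u)_k$; bounding $(Q(G)u)_k$ from above using nonnegativity of off-diagonals of $Q(G)$ and $u \ge u_k$ on those coordinates leads to $(s-\mu)u_k + 2 d_G(k) u_k \le$ hmm — this is exactly where the hypothesis must bite, and I anticipate spending most of the effort making this minimum-principle estimate airtight, possibly by instead diagonalizing the rank-one perturbation as in the method of Section \ref{sec:basic} and invoking Theorem \ref{thm:weylso} to locate $\mu$ precisely relative to the spectrum of $Q(G)+sI$.
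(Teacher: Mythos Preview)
You have overlooked the one-line argument the paper uses. From (\ref{eq:qmu_kst}) we have $Q_{\mu}=Q(G)+sI-\alpha_{H}(G)J$. The off-diagonal entries of $Q(G)=D(G)+A(G)$ lie in $\{0,1\}$, so once $\alpha_{H}(G)>1$ every off-diagonal entry of $Q_{\mu}$ is strictly negative. Thus $Q_{\mu}$ is an irreducible $Z$-matrix and hence minpositive, and Theorem~\ref{thm:qmu} finishes the proof immediately. The hypothesis $t>s$ is just the standing assumption of the section; all the argument itself needs is $t-\mu>0$, which is guaranteed by Lemma~\ref{lem:deg}.

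Your route is instead the $R_{\mu}$-formulation of Section~\ref{sec:q_alt}: you reduce to showing that $R_{\mu}^{-1}\mathbf{1}=(Q(G)+(s-\mu)I)^{-1}\mathbf{1}>0$, which is exactly the criterion of Theorem~\ref{thm:rmu}. That is a legitimate equivalent target, but you do not close the argument, and your minimum-principle sketch does not work as written. The matrix $R_{\mu}$ has \emph{nonnegative} off-diagonal entries, so there is no Perron--Frobenius or $M$-matrix structure to exploit on that side; the scalar identity $\mathbf{1}^{T}R_{\mu}^{-1}\mathbf{1}=1/\alpha_{H}(G)<1$ constrains only the total sum of entries, not individual row sums; and at a minimizing index $k$ the inequality you would need on $(Q(G)u)_{k}$ goes the wrong way. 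In short, the obstacle you flag as ``the crux'' is real for the approach you chose but entirely avoidable: the paper stays with $Q_{\mu}$, where the hypothesis $\alpha_{H}(G)>1$ translates directly into a sign condition on the off-diagonal entries.
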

\begin{proof}
If $\alpha_{H}(G)>1$, then by (\ref{eq:qmu_kst}) $Q_{\mu}$ is a $Z$-matrix all of whose offdiagonal entries are strictly negative. Therefore it is minpositive and the conclusion follows immediately from Theorem \ref{thm:qmu}.
\end{proof}

\begin{rmrk}\label{rem:mu}
Notice that $\alpha_{H}(G)=1$ is equivalent to $\mu(H)=t-s$ and $\alpha_{H}(G)>1$ is equivalent to $\mu(H)>t-s$.
\end{rmrk}


\begin{thm}\label{thm:Gdeg}
Let $H=\overline{K}_{s} \vee G$, with $t>s$. Suppose that one of the following cases holds: 
\begin{itemize}
\item
(A) $\delta(G)>t-s$; OR 
\item
(B) $\delta(G)=t-s$ and $\overline{G}$ is connected. 
\end{itemize}

\end{thm}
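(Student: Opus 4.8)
The plan is to route everything through the quantity $\alpha_{H}(G)$ and through Lemma \ref{lem:alpha}. Since $H=\overline{K}_{s}\vee G$ is a join of two nonempty graphs, Lemma \ref{lem:join} gives $\lambda_{n}(L(H))=n=s+t$. A vertex of $S$ has degree $t$ in $H$, while a vertex $v\in T$ has degree $s+d_{G}(v)$; hence $\delta(H)=\min\{t,\,s+\delta(G)\}$, and in each of cases (A) and (B) this equals $t$. By Lemma \ref{lem:das}, $\mu(H)<\delta(H)=t$, so $\alpha_{H}(G)=\frac{s}{t-\mu(H)}$ is a well-defined positive number. Feeding $\delta(H)=t$ and $\lambda_{n}(L(H))=s+t$ into Theorem \ref{thm:mu2} yields $\mu(H)\geq 2t-(s+t)=t-s$, i.e. $\alpha_{H}(G)\geq 1$ by Remark \ref{rem:mu}. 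If this inequality is strict we are done by Lemma \ref{lem:alpha}. So in both cases the whole question is whether $\alpha_{H}(G)=1$ can occur, and what happens if it does.

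In case (A) I claim $\alpha_{H}(G)=1$ is impossible, which gives $\alpha_{H}(G)>1$ and finishes the proof via Lemma \ref{lem:alpha}. Indeed, $\alpha_{H}(G)=1$ would mean $\mu(H)=t-s=2\delta(H)-\lambda_{n}(L(H))$, so equality holds in the relevant Weyl inequality; by the equality clause of Theorem \ref{thm:mu2} there is a nonzero $x$ with $L(H)x=(s+t)x$ and $D(H)x=tx$. In case (A) every $v\in T$ has degree $s+d_{G}(v)\geq s+\delta(G)>t$ strictly, so $D(H)x=tx$ forces $x$ to vanish on $T$. But then for $i\in S$ (which is independent and joined to all of $T$) we get $(L(H)x)_{i}=tx_{i}$, and comparing with $L(H)x=(s+t)x$ gives $sx_{i}=0$, so $x$ vanishes on $S$ too, contradicting $x\neq 0$.

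In case (B) the value $\alpha_{H}(G)=1$ genuinely can occur (for instance when $G$ is $(t-s)$-regular), and this is exactly where the hypothesis that $\overline{G}$ is connected is used. Assume $\alpha_{H}(G)=1$; by (\ref{eq:qmu_kst}) we then have $Q_{\mu}=Q(G)+sI-J_{t}$. Using the identities $L(G)+L(\overline{G})=tI-J_{t}$ and $Q(G)+L(G)=2D(G)$, rewrite this as $Q_{\mu}=2D(G)+(s-t)I+L(\overline{G})=\widetilde{D}-A(\overline{G})$, where $\widetilde{D}$ is a diagonal matrix (with positive entries, since $d_{G}(v)\geq\delta(G)=t-s$). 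Thus $Q_{\mu}$ is a $Z$-matrix whose off-diagonal support is precisely the edge set of $\overline{G}$; as $\overline{G}$ is connected, $Q_{\mu}$ is an irreducible $Z$-matrix. Writing $Q_{\mu}=cI-P$ with $c$ large enough that $P\geq 0$, the matrix $P$ is entrywise nonnegative and irreducible, so by Perron--Frobenius its spectral radius is a simple eigenvalue with a positive eigenvector; hence $\lambda_{1}(Q_{\mu})=c-\rho(P)$ is simple with a positive eigenvector, i.e. $Q_{\mu}$ is minpositive. Theorem \ref{thm:qmu} then gives that $H$ is $S$-Roth.

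The core of the argument is the bound $\mu(H)\geq t-s$, which drops out of Theorem \ref{thm:mu2} once one knows $\delta(H)=t$ and $\lambda_{n}(L(H))=s+t$; the only delicate point is the boundary case $\alpha_{H}(G)=1$, where Lemma \ref{lem:alpha} says nothing. Excluding that case in (A) rests on the equality characterization in Theorem \ref{thm:mu2}, and surviving it in (B) rests on using the connectedness of $\overline{G}$ to make $Q_{\mu}$ an irreducible $Z$-matrix; I expect this last step --- verifying that $Q_{\mu}$ really is an irreducible $Z$-matrix and therefore minpositive --- to be the part that needs the most care.
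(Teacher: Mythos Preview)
Your proof is correct and follows the paper's overall strategy: bound $\mu(H)\geq t-s$ via Theorem~\ref{thm:mu2} and Lemma~\ref{lem:join}, invoke Lemma~\ref{lem:alpha} when the inequality is strict, and in the boundary case $\alpha_H(G)=1$ observe that $Q_\mu=Q(G)+sI-J$ is a $Z$-matrix whose off-diagonal support is $E(\overline{G})$, hence irreducible and minpositive when $\overline{G}$ is connected.

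Your treatment of case~(A) is actually more careful than the paper's. The paper simply asserts $\alpha_H(G)>1$ in case~(A), apparently on the strength of the claim that $\delta(H)>t$ there; but as you correctly note, vertices of $S$ always have degree exactly $t$, so $\delta(H)=t$ in both cases and Theorem~\ref{thm:mu2} by itself only yields $\mu\geq t-s$. You close this gap by invoking the equality characterization in Theorem~\ref{thm:mu2} to rule out $\mu=t-s$ in case~(A) --- exactly the device the paper deploys one theorem later, in the proof of Theorem~\ref{thm:steve}. Your rewriting of $Q_\mu$ in case~(B) via the complement identity is a harmless variant of the paper's direct entrywise inspection.
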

Then $H$ is $S$-Roth.
\begin{proof}
The degrees of the vertices in $S$ all equal $t$. On the other hand, the degrees of the vertices in $T$ are all at least $s+\delta(G)$. Since we assumed $\delta(G) \geq t-s$ in both cases, we have that $\delta(H) \geq t$ in both of them, with equality obtaining in case (B).

Now by Lemma \ref{lem:join} and Theorem \ref{thm:mu2} we have that 
\begin{equation}\label{eq:qld}
\mu \geq 2t-(t+s)=t-s.
\end{equation}
Therefore, $\alpha_{H}(G) \geq 1$ and we see that $Q_{\mu}$ is a $Z$-matrix. 

In case (A) we have $\alpha_{H}(G)>1$ and so we are done by Lemma \ref{lem:alpha}. Otherwise, $\alpha_{H}(G)=1$ and $Q_{\mu}=Q+sI-J$ is a $Z$-matrix whose offdiagonal zero pattern is exactly the same as that of the adjacency matrix of $\overline{G}$. Therefore, if $\overline{G}$ is connected, then $Q_{\mu}$ is irreducible and thus a minpositive matrix.  
\end{proof}

If $\delta(G)=t-s$ and $\overline{G}$ is disconnected, then $H$ may fail to be $S$-Roth, as Example \ref{expl:42} will illustrate. 

\begin{expl}\label{expl:42}
Let $t=6,s=4, G=K_{4,2}$. In this case, we have $\delta(G) =t-s=2$, but the smallest eigenvector of $Q(H)$ has two zero entries. 

Indeed, in this case we have:
$$
Q_{\mu}=\left(\begin{array}{cccccc} 5 & -1 & -1 & -1 & 0 & 0\\ -1 & 5 & -1 & -1 & 0 & 0\\ -1 & -1 & 5 & -1 & 0 & 0\\ -1 & -1 & -1 & 5 & 0 & 0\\ 0 & 0 & 0 & 0 & 7 & -1\\ 0 & 0 & 0 & 0 & -1 & 7 \end{array}\right)
$$ and $\mu=t-s=2,w=[1,1,1,1,0,0]^{T}$.
\end{expl}

Our next result supplies a complete characterization of $S$-Rothness when $\delta(G)=t-s$ and $\overline{G}$ is disconnected:
\begin{thm}\label{thm:steve}
Let $H=\overline{K}_{s} \vee G$ with $t>s$ and suppose that $\delta(G)=t-s$, that $\overline{G}$ is disconnected, and that $G=G_{1} \vee G_{2} \vee \ldots \vee G_{k}$ is a maximal join decomposition.
Then $H$ is $S$-Roth if and only if for each $j=1,2,\ldots,k$ there is a vertex $v_{j} \in V(G_{j})$ such that $d_{G}(v_{j})>t-s$.
\end{thm}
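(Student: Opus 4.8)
The plan is to use the formula $Q_{\mu}=Q+sI-\alpha_{H}(G)J$ from Equation (\ref{eq:qmu_kst}) and Theorem \ref{thm:qmu}, reducing everything to deciding when $Q_{\mu}$ is minpositive. Exactly as in the proof of Theorem \ref{thm:Gdeg}, the hypotheses $t>s$ and $\delta(G)=t-s$ give $\delta(H)=t$, so Lemma \ref{lem:join} together with Theorem \ref{thm:mu2} yields $\mu\geq t-s$, i.e. $\alpha_{H}(G)\geq 1$, and $Q_{\mu}$ is a $Z$-matrix. This produces a dichotomy: if $\mu>t-s$ then $\alpha_{H}(G)>1$, every off-diagonal entry of $Q_{\mu}$ is strictly negative, so $Q_{\mu}$ is an irreducible $Z$-matrix, hence minpositive, and $H$ is $S$-Roth by Theorem \ref{thm:qmu} (this is Lemma \ref{lem:alpha}). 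Thus the entire theorem comes down to characterizing precisely when $\mu=t-s$ and showing that in that regime $H$ fails to be $S$-Roth.

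Next I would pin down when $\mu=t-s$. By the equality clause of Theorem \ref{thm:mu2} (which rests on that of Weyl's inequality, Theorem \ref{thm:weylso}), $\mu=t-s$ holds if and only if there is a nonzero $x$ with $Q(H)x=(t-s)x$, $L(H)x=(t+s)x$ and $D(H)x=tx$. Now $H=\overline{K}_{s}\vee G_{1}\vee\cdots\vee G_{k}$ is a maximal join decomposition of $H$ (each $\overline{G}_{j}$ is connected and $\overline{\overline{K}_{s}}=K_{s}$ is connected), with $k+1\geq 3$ joinees, so by Lemma \ref{lem:join_max} any $x$ satisfying $L(H)x=(t+s)x$ is constant on $S$ and on each $V(G_{j})$, and by Lemma \ref{lem:mohar} it is orthogonal to $\mathbf{1}$. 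The equation $D(H)x=tx$ forces $x$ to vanish at every vertex whose $H$-degree exceeds $t$; since $\delta(H)=t$, the vertices of $H$-degree exactly $t$ are those of $S$ together with the $v\in V(G)$ with $d_{G}(v)=t-s$. Combined with constancy on joinees, this says: if $x\not\equiv 0$ on $V(G_{j})$ then every vertex of $G_{j}$ has degree $t-s$ in $G$. Conversely, if some $G_{j}$ has all its vertices of degree $t-s$ in $G$, then the vector equal to $|V(G_{j})|$ on $S$, to $-s$ on $V(G_{j})$, and to $0$ elsewhere is orthogonal to $\mathbf{1}$, constant on joinees, supported on degree-$t$ vertices, and hence (using $Q(H)=2D(H)-L(H)$) a common eigenvector of the required kind. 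Therefore $\mu=t-s$ if and only if there is an index $j$ for which every vertex of $G_{j}$ has degree $t-s$ in $G$ — precisely the negation of the condition in the statement.

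Putting the pieces together: if the condition of the theorem holds then $\mu>t-s$ and the first paragraph gives that $H$ is $S$-Roth; if it fails, the explicit vector $x$ above is an eigenvector of $Q(H)$ for $\mu=t-s$ that vanishes on $T\setminus V(G_{j})$, which is nonempty since $k\geq 2$, so $x(V(H)-S)$ has a zero entry and $H$ is not $S$-Roth. The delicate point — and the part I expect to require the most care — is the middle paragraph: one must invoke the equality clause of Weyl's inequality through Theorem \ref{thm:mu2}, combine it correctly with the join-constancy of Lemma \ref{lem:join_max} and the degree constraint, and verify that the resulting conditions are not merely necessary but are actually realized by the simple candidate vector, which is in turn what exhibits the failure of the sign pattern (this is where the hypothesis that $\overline{G}$ is disconnected, i.e. $k\geq 2$, is essential). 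The remaining ingredients — the degree count giving $\delta(H)=t$ and hence $\mu\geq t-s$, and the irreducibility of $Q_{\mu}$ when $\alpha_{H}(G)>1$ — are routine and mirror the proof of Theorem \ref{thm:Gdeg}.
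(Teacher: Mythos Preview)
Your proposal is correct and follows essentially the same approach as the paper: both reduce to the dichotomy $\mu>t-s$ versus $\mu=t-s$, invoke the equality clause of Theorem \ref{thm:mu2} together with Lemma \ref{lem:join_max} (applied to the maximal join decomposition $H=\overline{K}_{s}\vee G_{1}\vee\cdots\vee G_{k}$) to force $x\equiv 0$ when each $G_{j}$ contains a high-degree vertex, and exhibit the same explicit eigenvector (up to sign) when some $G_{j}$ is $(t-s)$-regular in $G$. The only cosmetic difference is that the paper verifies $Q(H)x=(t-s)x$ by a direct degree computation via \eqref{eq:dg1}, whereas you route it through $Q(H)=2D(H)-L(H)$ and the (implicit) converse of Lemma \ref{lem:join_max}.
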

\begin{proof}
Suppose first without loss of generality that for every vertex $v \in V(G_{1})$ it holds that $d_{G}(v)=t-s$. Furthermore, $d_{G_{1}}(v)$ is equal to $d_{G}(v)$ minus the number of edges between $v$ and the subgraph of $G$ induced by $G_{2} \vee \ldots \vee G_{k}$. That is: 
\begin{equation}\label{eq:dg1}
d_{G_{1}}(v)=(t-s)-(t-|V(G_{1})|)=|V(G_{1})|-s.
\end{equation}

Define now a vector $x \in \mathbb{R}^{|V(H)|}$ by: 
$$
x_{i}=\begin{cases}
-|V(G_{1})|  & \text{, if } i \in S \\
s   & \text{, if } i \in V(G_{1}) \\
0   & \text{, otherwise.} \\
\end{cases}
$$
Using \eqref{eq:dg1} it is straightforward to verify that $Q(H)x=(t-s)x$. Therefore, $x$ is an eigenvector of $Q(H)$ corresponding to the eigenvalue $t-s$. From the proof of Theorem \ref{thm:Gdeg} we know that $\mu \geq t-s$ and therefore $\mu=t-s$. But since $x$ has zero entries, we deduce that $H$ is not $S$-Roth.

Suppose now that there is a vertex $v_{j} \in V(G_{j})$ such that $d_{G}(v_{j})>t-s$ for every $j$. As in the proof of Theorem \ref{thm:Gdeg} we have $\mu \geq t-s$. If $\mu>t-s$ then $H$ is $S$-Roth by Lemma \ref{lem:alpha}. We are now going to show that the case $\mu=t-s$ is in fact impossible. 

Let $x$ be an eigenvector of $Q(H)$ corresponding to $\mu$. Since $\mu=t-s$ we see that equality holds throughout \eqref{eq:qld} and therefore we infer from Theorem \ref{thm:mu2} that $(x,2t)$ is an eigenpair of $2D(H)$ and that $(x,t+s)$ is an eigenpair of $L(H)$. Now, the first statement immediately implies that $x$ is nonzero only on vertices of degree $t$ in $H$, whereas the second statement implies, via Lemma \ref{lem:join_max}, that $x$ is constant over $S$ and over $V(G_{i})$ for every $i$. Put together with our assumption, these two observations imply that $x=0$. This is a contradiction, and therefore the case $\mu=t-s$ is impossible. 
\end{proof}

We can deduce a rudimentary extremal-type result:
\begin{cor}\label{cor:extremal}
$H=\overline{K}_{s} \vee G$, with $2 \leq s<t$. If $G$ has at least $\binom{t}{2}-(s-2)$ edges, then $H$ is $S$-Roth.
\end{cor}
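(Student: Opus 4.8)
The plan is to observe that the hypothesis on the number of edges of $G$ is strong enough to force $\delta(G) > t-s$, so that the corollary reduces immediately to case (A) of Theorem \ref{thm:Gdeg}.

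First I would pass to the complement $\overline{G}$, a graph on the $t$ vertices of $T$. Since $G$ has at least $\binom{t}{2} - (s-2)$ edges out of the $\binom{t}{2}$ possible, the complement $\overline{G}$ has at most $s-2$ edges. The key (and essentially only) observation is then that a graph with at most $s-2$ edges has maximum degree at most $s-2$, since any single vertex can be incident to at most all of the edges that are present. Hence $d_{\overline{G}}(v)\le s-2$ for every $v\in V(G)$, and consequently
\[
d_G(v) = (t-1) - d_{\overline{G}}(v) \ge (t-1) - (s-2) = t-s+1
\]
for every $v$; in particular $\delta(G) \ge t-s+1 > t-s$. Since we are also given $t>s$, Theorem \ref{thm:Gdeg}(A) applies directly and yields that $H$ is $S$-Roth.

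I do not expect any real obstacle here; the argument is a one-line counting bound followed by an appeal to an already-proved theorem. The only point that warrants a second glance is the boundary case $s=2$, where the hypothesis forces $\overline{G}$ to have no edges, i.e. $G=K_t$, so that $\delta(G)=t-1>t-2=t-s$ and the same inequality (and hence conclusion) still holds. It is also worth remarking that the threshold $\binom{t}{2}-(s-2)$ is sharp for an estimate of this crude form: taking $s=2$ and $G=K_t-e$ gives $|E(G)|=\binom{t}{2}-(s-1)$, $\delta(G)=t-s$, and $\overline{G}$ disconnected, and Theorem \ref{thm:steve} then shows that $H$ need not be $S$-Roth (the join component $\overline{K}_2$ of $G$ contains no vertex of degree exceeding $t-s$).
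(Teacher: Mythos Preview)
Your proof is correct and follows essentially the same approach as the paper: both argue that deleting at most $s-2$ edges from $K_t$ (equivalently, $\Delta(\overline{G})\le s-2$) forces $\delta(G)\ge t-s+1$, and then invoke case~(A) of Theorem~\ref{thm:Gdeg}. Your additional remarks on the boundary case $s=2$ and on sharpness via Theorem~\ref{thm:steve} are correct extras not present in the paper's brief proof.
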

\begin{proof}
We can view $G$ as $K_{t}$ from which a number of edges have been deleted. 
Even after the deletion of $s-2$ edges, the degree of any vertex is at least $(t-1)-(s-2)=t-s+1$ and so we are done by Theorem \ref{thm:Gdeg}.
\end{proof}

\section{Interlude - A new analysis of $S$-Rothness}\label{sec:q_alt}
We shall now revisit the analysis of the 
eigenequations (\ref{eq:sys1}) and (\ref{eq:sys2}), with the vectors $w$ and $z$ trading roles this time. We only consider the case $B=K_{s,t}$ so we may at once take $K=J_{t,s}$ and $D_{1}=sI,D_{2}=tI$. 

Now we introduce a new matrix:
$$R_{\mu}=(Q+sI-\mu I).$$
We are going to assume that $R_{\mu}$ is positive definite (this holds very often - whenever $s>\mu$ and in some other cases as well).

Consider first Equation (\ref{eq:sys1}).
We can solve it as:
\begin{equation}\label{eq:new_w}
w=-R_{\mu}^{-1}Jz.
\end{equation}
Substituting (\ref{eq:new_w}) into (\ref{eq:sys2}) we obtain:
$$
-J^{T}R_{\mu}^{-1}Jz+tz=\mu z.
$$
The matrix $J^{T}R_{\mu}^{-1}J$ is clearly a multiple of $J$, namely $\gamma J$, where $\gamma$ is the sum of all entries in $R_{\mu}^{-1}$. Note that $\gamma>0$, since $R_{\mu}^{-1}$ is positive definite. Therefore we can write:
\begin{equation}\label{eq:new_z}
\gamma Jz=(t-\mu)z. 
\end{equation}
Observe now that $Jz=\sigma_{z}\mathbf{1}$ where $\sigma_{z}$ is the sum of all entries in $z$. It is impossible to have $\sigma_{z}=0$ since it would imply $z=0$ by (\ref{eq:new_z}) and consequently also $w=0$ by (\ref{eq:new_w}) - a contradiction. Therefore we infer that
\begin{equation}\label{eq:z_val}
z=\frac{\gamma \sigma_{z}}{t-\mu}\mathbf{1}.
\end{equation}

Now assume without loss of generality that $z<0$; we consult again Equation (\ref{eq:new_w}) and see that $w$ is a positive multiple of the vector of row sums of $R_{\mu}^{-1}$. We can summarize our findings as follows:

\begin{thm}\label{thm:rmu}
Let $H=\overline{K}_{s} \vee G$. Suppose that $R_{\mu}$ is positive definite. Then $H$ is $S$-Roth if and only if the row sums of $R_{\mu}^{-1}$ are all positive.
\end{thm}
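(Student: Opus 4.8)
The plan is to verify Theorem~\ref{thm:rmu} by essentially reading off the conclusion from the analysis already carried out in equations~(\ref{eq:new_w})--(\ref{eq:z_val}), and then invoking Proposition~\ref{prop:roth} to reduce $S$-Rothness to a statement about the sign pattern of $w=x(T)$. First I would recall that by Proposition~\ref{prop:roth}, $H$ is $S$-Roth exactly when every eigenvector $x$ associated to $\mu(H)$ satisfies $x(S)>0$ or $x(S)<0$; but equation~(\ref{eq:z_val}) already tells us that $z=x(S)$ is a nonzero scalar multiple of $\mathbf{1}$, so the condition $x(S)>0$ or $x(S)<0$ is automatic. Thus the genuine content of the theorem is not whether $z$ has a constant sign, but whether $w=x(T)$ has the \emph{opposite} constant sign, i.e. whether the sign pattern of Proposition~\ref{prop:roth89} really does hold; equivalently, the issue is ruling out that $w$ has mixed signs or a zero entry.

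Next I would make the dependence on the row sums explicit. Fix an eigenvector $x=[w^{T}\ z^{T}]^{T}$ for $\mu$. By~(\ref{eq:z_val}) we may normalize so that $z=-\mathbf{1}_{s}$ (note $z\neq 0$, again by~(\ref{eq:z_val}) together with the argument that $\sigma_z\neq 0$). Then~(\ref{eq:new_w}) gives $w=-R_{\mu}^{-1}Jz = R_{\mu}^{-1}J\mathbf{1}_s = R_{\mu}^{-1}(s\mathbf{1}_t)=s\,R_{\mu}^{-1}\mathbf{1}_t$, so $w_i = s\cdot r_i(R_{\mu}^{-1})$ for each $i\in T$; here I use that $J=J_{t,s}$ and $D_1=sI$, $D_2=tI$ as set up at the start of the section, and that $R_\mu$ (hence $R_\mu^{-1}$) is symmetric so its row sums coincide with the entries of $R_\mu^{-1}\mathbf 1$. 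Since $s>0$, the vector $w$ is positive if and only if all row sums of $R_{\mu}^{-1}$ are positive. Combining: if the row sums are all positive then $z<0$ and $w>0$ (this is the ``vice versa'' branch of Definition~\ref{defin:roth}), and since $\mu$ is simple whenever this holds --- which I should check --- every eigenvector for $\mu$ is a scalar multiple of this one, so $H$ is $S$-Roth. Conversely, if some row sum of $R_{\mu}^{-1}$ is nonpositive, then the eigenvector just constructed has $w_i\le 0$ for some $i$ while $z<0$, so the sign pattern of~(\ref{eq:roth_eq}) fails and $H$ is not $S$-Roth.

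The one point that needs a little care --- and the most likely obstacle --- is the simplicity of $\mu$ in the forward direction, since Definition~\ref{defin:roth} quantifies over \emph{every} eigenvector of $\mu$. The cleanest route is: the computation above shows that \emph{any} eigenvector $x$ for $\mu$ must have $z=x(S)$ a scalar multiple of $\mathbf 1_s$ and $w=x(T)$ the corresponding scalar multiple of $R_\mu^{-1}\mathbf 1_t$ (with the scalars linked by~(\ref{eq:z_val}), and the scalar on $z$ forced to be nonzero), so the $\mu$-eigenspace is at most one-dimensional; hence $\mu$ is automatically simple here, and in the forward direction the single (up to scaling) eigenvector has $x(S)$ and $x(T)$ of opposite constant sign. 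I would also note for completeness that positive-definiteness of $R_\mu$ guarantees $R_\mu^{-1}$ exists and is symmetric positive definite, so $\gamma=\mathbf 1^T R_\mu^{-1}\mathbf 1>0$, which is what makes~(\ref{eq:new_z})--(\ref{eq:z_val}) legitimate; these observations are already in the text preceding the theorem, so the proof can simply cite them. Assembling these pieces gives the equivalence with no further computation.

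\begin{proof}
By Proposition~\ref{prop:roth}, $H$ is $S$-Roth precisely when every eigenvector $x$ of $Q(H)$ corresponding to $\mu$ has $x(S)>0$ or $x(S)<0$, and in that case $x(S)$ and $x(T)$ automatically have opposite signs. Let $x=[w^{T}\ z^{T}]^{T}$ be any such eigenvector, with $w=x(T)$, $z=x(S)$. Since $R_{\mu}$ is positive definite it is invertible, so~(\ref{eq:sys1}) yields $w=-R_{\mu}^{-1}Jz$ as in~(\ref{eq:new_w}), and substitution into~(\ref{eq:sys2}) gives $\gamma Jz=(t-\mu)z$ as in~(\ref{eq:new_z}), where $\gamma=\mathbf{1}^{T}R_{\mu}^{-1}\mathbf{1}>0$ because $R_{\mu}^{-1}$ is positive definite. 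As noted before the theorem, $\sigma_{z}:=\mathbf{1}^{T}z\neq 0$ (otherwise~(\ref{eq:new_z}) forces $z=0$ and then~(\ref{eq:new_w}) forces $w=0$), and hence~(\ref{eq:z_val}) gives $z=\frac{\gamma\sigma_{z}}{t-\mu}\mathbf{1}_{s}$, a nonzero scalar multiple of $\mathbf{1}_{s}$. In particular the condition of Proposition~\ref{prop:roth} holds for every eigenvector $x$ of $\mu$, and moreover $z$, and then $w=-R_{\mu}^{-1}Jz$, are determined by $x$ up to this single nonzero scalar; so the $\mu$-eigenspace is one-dimensional and $\mu$ is simple.

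Normalize so that $z=-\mathbf{1}_{s}$. Since $K=J_{t,s}$, $D_{1}=sI_{t}$, $D_{2}=tI_{s}$ here, we have $Jz=J_{t,s}(-\mathbf{1}_{s})=-s\mathbf{1}_{t}$, so~(\ref{eq:new_w}) gives $w=s\,R_{\mu}^{-1}\mathbf{1}_{t}$. Because $R_{\mu}$, and hence $R_{\mu}^{-1}$, is symmetric, the entries of $R_{\mu}^{-1}\mathbf{1}_{t}$ are exactly the row sums $r_{i}(R_{\mu}^{-1})$, so $w_{i}=s\,r_{i}(R_{\mu}^{-1})$ for each $i\in T$, and $s>0$.

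If all row sums of $R_{\mu}^{-1}$ are positive, then $w>0$ while $z=-\mathbf{1}_{s}<0$; since $\mu$ is simple, every eigenvector for $\mu$ is a scalar multiple of this one, so $H$ is $S$-Roth. Conversely, if $r_{i}(R_{\mu}^{-1})\le 0$ for some $i\in T$, then the eigenvector above has $w_{i}\le 0$ and $z<0$, so neither $x(T)>0,x(S)<0$ nor $x(T)<0,x(S)>0$ holds, and $H$ is not $S$-Roth.
\end{proof}
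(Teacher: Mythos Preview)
Your argument is correct and follows the same route as the paper's derivation in Section~\ref{sec:q_alt}: solve~(\ref{eq:sys1}) for $w$ using the invertibility of $R_\mu$, substitute into~(\ref{eq:sys2}) to see that $z=x(S)$ is a nonzero multiple of $\mathbf{1}_s$, and read off that $w=x(T)$ is a positive scalar multiple of the row-sum vector of $R_\mu^{-1}$. Your explicit verification that $\mu$ is simple and your separate handling of the converse direction spell out points the paper leaves implicit.

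One cosmetic caution: your opening appeal to Proposition~\ref{prop:roth}, phrased as a criterion on $x(S)$, combined with your observation that $z=x(S)$ is \emph{always} a nonzero multiple of $\mathbf{1}_s$ here, would---if taken at face value---already force $H$ to be $S$-Roth unconditionally, contradicting the genuine content of the theorem (and examples such as Remark~\ref{rem:path}). The operative version of Proposition~\ref{prop:roth} is the one reducing $S$-Rothness to sign-definiteness of $x(T)$, not $x(S)$; more to the point, your final two paragraphs argue directly from Definition~\ref{defin:roth} and do not actually rely on Proposition~\ref{prop:roth}, so the proof stands once that superfluous invocation is removed.
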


\begin{rmrk}
If $\mu(H)=s$ and $G$ is bipartite, then $R_{\mu}$ is singular. For example, this happens when $s=3,t=14,G=C_{14}$. In that example we have $z=0$ and this $H$ is not $S$-Roth.
\end{rmrk}


\section{Some more matrix tools}\label{sec:more}

The preceding section saw the injection of inverse matrices into the discussion. Their analysis will require two more tools which we present here. The matrices we shall deal with in the next section will be of the form $Q(C_{k})+\lambda I$ and are easily seen to be strictly diagonally dominant.

The study of strictly diagonally dominant matrices is an old and venerable enterprise. The fact, crucially useful to us here, that a weaker form of diagonal dominance carries over to the inverse seems to have been noticed first by Ostrowski \cite{Ost52} in 1952. We will use a slightly more recent result that quantifies this statement:
\begin{thm}\cite[Theorem 2.4]{LiHuaSheLi07}\label{thm:diagdom}
Let $A$ be a strictly diagonally dominant matrix. Let $\widetilde{A}=A^{-1}=(\widetilde{a}_{ij})$. Then we have:
$$
|\widetilde{a}_{ji}| \leq \max_{l \neq i} \left\{ \frac{|a_{li}|}{|a_{ll}|-\sum_{k \neq l,i}|a_{lk|}} \right\} |\widetilde{a}_{ii}|, \quad \textit{for all} \quad j \neq i.
$$
\end{thm}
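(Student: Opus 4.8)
The plan is to work directly with the column identity $A\widetilde{A}=I$ together with a judicious choice of index. Fix $i$ and write $\widetilde{a}_{\cdot i}$ for the $i$th column of $\widetilde{A}$, so that $A\widetilde{a}_{\cdot i}=e_i$; in coordinates this says $\sum_{k}a_{lk}\widetilde{a}_{ki}=\delta_{li}$ for every $l$. (Here $A$ is invertible because strict diagonal dominance keeps $0$ outside every Gershgorin disc, so $\widetilde{A}$ indeed exists.) If the off-diagonal entries $\widetilde{a}_{ji}$, $j\neq i$, all vanish there is nothing to prove, so otherwise I would pick $p\neq i$ with $|\widetilde{a}_{pi}|=\max_{j\neq i}|\widetilde{a}_{ji}|>0$.

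First I would take the $p$th coordinate of $A\widetilde{a}_{\cdot i}=e_i$; since $p\neq i$ this gives $0=\sum_{k}a_{pk}\widetilde{a}_{ki}$, hence $a_{pp}\widetilde{a}_{pi}=-a_{pi}\widetilde{a}_{ii}-\sum_{k\neq p,i}a_{pk}\widetilde{a}_{ki}$. Passing to moduli and using $|\widetilde{a}_{ki}|\leq|\widetilde{a}_{pi}|$ for every $k\neq i$ (by maximality of $p$) yields $|a_{pp}|\,|\widetilde{a}_{pi}|\leq|a_{pi}|\,|\widetilde{a}_{ii}|+|\widetilde{a}_{pi}|\sum_{k\neq p,i}|a_{pk}|$, which rearranges to $\bigl(|a_{pp}|-\sum_{k\neq p,i}|a_{pk}|\bigr)|\widetilde{a}_{pi}|\leq|a_{pi}|\,|\widetilde{a}_{ii}|$.

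Second, I would note that strict diagonal dominance of the $p$th row, $|a_{pp}|>\sum_{k\neq p}|a_{pk}|\geq\sum_{k\neq p,i}|a_{pk}|$, makes the parenthesised factor strictly positive, so it may be divided out to give $|\widetilde{a}_{pi}|\leq\frac{|a_{pi}|}{|a_{pp}|-\sum_{k\neq p,i}|a_{pk}|}\,|\widetilde{a}_{ii}|\leq\max_{l\neq i}\frac{|a_{li}|}{|a_{ll}|-\sum_{k\neq l,i}|a_{lk}|}\,|\widetilde{a}_{ii}|$. Finally, for an arbitrary $j\neq i$ one has $|\widetilde{a}_{ji}|\leq|\widetilde{a}_{pi}|$ by the choice of $p$, and the stated inequality follows for every $j\neq i$.

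The whole argument is a single Gershgorin-style estimate once the maximizing index $p$ is fixed, so there is no serious obstacle; the one point that needs a moment's care — and the only place the hypothesis is really used — is verifying that deleting the $k=i$ term from the row sum still leaves a strictly positive denominator, which is exactly what strict (rather than merely weak) diagonal dominance guarantees.
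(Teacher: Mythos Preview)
Your argument is correct. The paper does not supply its own proof of this theorem; it is quoted verbatim from \cite[Theorem 2.4]{LiHuaSheLi07} as a tool, so there is no in-paper proof to compare against. Your Gershgorin-style extraction of the maximising off-diagonal index $p$ and the subsequent rearrangement of the $p$th row of $A\widetilde{A}=I$ is exactly the standard route to such estimates and is essentially how the cited source proceeds.
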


We will also use a remarkable result of Bai and Golub, that also gives us information about $A^{-1}$.

\begin{thm}\cite{BaiGol97}\label{thm:bai}
Let $A \in \mathbb{R}^{n,n}$ be a symmetric positive definite matrix whose eigenvalues lie in $[a,b]$, with $a>0$. Furthermore, let $m_{1}=\Tr{A},m_{2}=||A||_{F}^{2}$. Then:
$$
\left[
        \begin{array}{cc}
           m_{1}& n          
        \end{array}
    \right]
    \left[
        \begin{array}{cc}
           m_{2}& m_{1}\\
           b^{2} & b         
        \end{array}
    \right]^{-1}
    \left[
        \begin{array}{c}
           n \\
           1          
        \end{array}
    \right] \leq \Tr{A^{-1}}
$$
and
$$    
\Tr{A^{-1}} \leq
    \left[
        \begin{array}{cc}
           m_{1}& n          
        \end{array}
    \right]
    \left[
        \begin{array}{cc}
           m_{2}& m_{1}\\
           a^{2} & a        
        \end{array}
    \right]^{-1}
    \left[
        \begin{array}{c}
           n \\
           1          
        \end{array}
    \right].
$$
\end{thm}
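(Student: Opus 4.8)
The plan is to read $\Tr(A^{-1})=\sum_{i=1}^{n}\lambda_i^{-1}$ as $\int\lambda^{-1}\,d\mu(\lambda)$, where $\mu=\sum_{i=1}^{n}\delta_{\lambda_i}$ is the atomic spectral measure of $A$. By hypothesis $\mu$ is a nonnegative measure supported in $[a,b]\subset(0,\infty)$ with $\int d\mu=n$, $\int\lambda\,d\mu=\Tr A=m_1$ and $\int\lambda^2\,d\mu=||A||_F^2=m_2$. Both inequalities will be obtained as two-point Gauss--Radau quadrature estimates for $f(\lambda)=1/\lambda$ against $\mu$, with one node pinned at an endpoint of $[a,b]$.

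Fix $t_0\in\{a,b\}$ and let $\nu=w_0\delta_{t_0}+w_1\delta_{t_1}$ be the Gauss--Radau rule with prescribed node $t_0$: the weights $w_0,w_1\ge 0$ and the free node $t_1\in[a,b]$ are pinned down by $\int\lambda^k\,d\nu=\int\lambda^k\,d\mu$ for $k=0,1,2$. (Existence of this rule with $t_1$ in the interval and nonnegative weights is the standard theory of Radau quadrature; it uses $nm_2>m_1^2$, i.e.\ that $A$ is not scalar, the scalar case being trivial.) Since the rule is exact on polynomials of degree $\le 2$, the classical Radau remainder formula gives, for some $\xi\in(a,b)$,
\[
\Tr(A^{-1})-\int\frac{d\nu}{\lambda}=\frac{f'''(\xi)}{6}\int_{a}^{b}(\lambda-t_0)(\lambda-t_1)^2\,d\mu(\lambda).
\]
Now $f'''(\lambda)=-6\lambda^{-4}<0$ on $[a,b]$, so the right-hand side is $\ge 0$ when $t_0=b$ (then $\lambda-t_0\le 0$) and $\le 0$ when $t_0=a$. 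Hence $\int\lambda^{-1}\,d\nu$ is a lower bound for $\Tr(A^{-1})$ when the pinned node is $b$ and an upper bound when it is $a$ --- exactly the roles of $b$ and $a$ in the statement.

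It remains to put $\int\lambda^{-1}\,d\nu=w_0/t_0+w_1/t_1$ in closed form and recognise the matrix expression. Rather than solving for $w_0,w_1,t_1$ separately, set $p(\lambda)=(\lambda-t_0)(\lambda-t_1)$; then $\int p\,d\nu=0$ and, dividing by $\lambda$, also $\int p(\lambda)\lambda^{-1}\,d\nu=0$. Expanding both against the moments of $\nu$ (which agree with those of $\mu$) yields $m_2-(t_0+t_1)m_1+t_0t_1\,n=0$ and $m_1-(t_0+t_1)n+t_0t_1\int\lambda^{-1}d\nu=0$; using $t_0=b$, so $t_0t_1=bt_1$, to eliminate $t_0+t_1$ and $t_0t_1$ gives
\[
\int\frac{d\nu}{\lambda}=\frac{nm_2-m_1^2+nbm_1-n^2b^2}{b\,(m_2-bm_1)}.
\]
A direct $2\times 2$ inversion (determinant $b(m_2-bm_1)$) shows the right side equals $\begin{bmatrix}m_1 & n\end{bmatrix}\begin{bmatrix}m_2 & m_1\\ b^2 & b\end{bmatrix}^{-1}\begin{bmatrix}n\\1\end{bmatrix}$, which is the claimed lower bound; repeating verbatim with $a$ in place of $b$ delivers the upper bound.

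The main obstacle is not any single computation but assembling the Gauss--Radau prerequisites cleanly --- that the prescribed-node rule exists with $t_1\in[a,b]$ and nonnegative weights, and that its remainder has the stated one-signed form governed by $f'''$. Once these are in hand the sign bookkeeping and the $2\times2$ algebra are routine. One can also bypass the quadrature language altogether: for each endpoint produce the quadratic $q$ with $q(\lambda)\le 1/\lambda$ (resp.\ $\ge$) on $[a,b]$, namely the one for which $1-\lambda q(\lambda)$ is a cubic with a double root in $(a,b)$ and a simple root at the chosen endpoint, and integrate the inequality against $\mu$; the statement then reduces to a one-line comparison plus the same $2\times 2$ identity.
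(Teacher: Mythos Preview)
The paper does not supply its own proof of this theorem: it is quoted verbatim from Bai and Golub \cite{BaiGol97} and used as a black box in Lemma~\ref{lem:bai}. There is therefore nothing in the paper to compare your argument against.

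That said, your proof is correct and is in fact the method of the cited source: Bai and Golub derive these bounds precisely by interpreting $\Tr(A^{-1})$ as a moment integral of the spectral measure and applying two-point Gauss--Radau quadrature with the prescribed node at $b$ (lower bound) or $a$ (upper bound), the sign of the error being controlled by the constant sign of $f'''(\lambda)=-6\lambda^{-4}$ on $[a,b]$. Your closed-form computation of $\int\lambda^{-1}\,d\nu$ via the two relations $m_2-(t_0+t_1)m_1+t_0t_1n=0$ and $m_1-(t_0+t_1)n+t_0t_1\int\lambda^{-1}d\nu=0$ is clean and checks out against the $2\times2$ matrix expression. The only point that deserves a slightly fuller justification is the remainder formula with a single $\xi$: strictly speaking one first gets $f'''(\xi_\lambda)$ inside the integral from Hermite interpolation error, and then uses the constant sign of $(\lambda-t_0)(\lambda-t_1)^2$ together with that of $f'''$ to pull out the sign; but this is exactly the elementary route you sketch in the last paragraph, so the argument is complete.
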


Now let $A=Q(C_{k})+\lambda I$, with $\lambda>0$. We immediately
observe that $A$ is both symmetric positive definite and strictly diagonally dominant. Notice also that $A$ is a circulant matrix and therefore $A^{-1}$ is also circulant (cf. \cite[p. 74]{Circulant}) and therefore all diagonal entries of $A^{-1}$ are equal to $\Tr{A^{-1}}/k$. Combining Theorem \ref{thm:diagdom} with the upper bound of Theorem \ref{thm:bai}, we have the following bounds on the entries of $A$:
\begin{lem}\label{lem:bai}
Let $A=Q(C_{k})+\lambda I,\lambda>0$ and let $\widetilde{A}=A^{-1}$. Then:

$$\widetilde{a}_{ii} \leq \frac{\lambda+1}{\lambda(\lambda+3)}$$ 
and 
$$|\widetilde{a}_{ij}| \leq \frac{1}{\lambda+1} |\widetilde{a}_{ii}|, \forall j \neq i.$$

\end{lem}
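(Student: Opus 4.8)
The plan is to combine the two cited external results directly with the structural observations already made in the paragraph immediately preceding the lemma. We have $A = Q(C_k) + \lambda I$ with $\lambda > 0$; this matrix is circulant with diagonal entries $2 + \lambda$ and exactly two off-diagonal entries equal to $1$ in each row (the two cyclic neighbours), all other entries zero. It is symmetric, strictly diagonally dominant (since $2 + \lambda > 2$), and positive definite. Because $A$ is circulant, so is $\widetilde{A} = A^{-1}$, hence all its diagonal entries coincide and equal $\Tr(\widetilde{A})/k = \Tr(A^{-1})/k$.

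For the diagonal bound, I would invoke the upper bound in Theorem~\ref{thm:bai}. The eigenvalues of $Q(C_k)$ are $2 + 2\cos(2\pi j/k)$ for $j = 0,\dots,k-1$, so the eigenvalues of $A$ lie in $[a,b]$ with $a = \lambda$ (attained, up to parity of $k$, near $j$ with $\cos \approx -1$) and $b = \lambda + 4$; for the upper bound on $\Tr(A^{-1})$ only the value $a$ enters, and using the crude but valid choice $a = \lambda$ is what the stated bound needs. We also read off $m_1 = \Tr(A) = k(2+\lambda)$ and $m_2 = \|A\|_F^2 = k\bigl((2+\lambda)^2 + 2\bigr)$ (each row contributes $(2+\lambda)^2$ from the diagonal and $1+1$ from the two off-diagonal ones). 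Plugging $n = k$, $m_1$, $m_2$, $a = \lambda$ into the $2\times 2$ quadratic form in Theorem~\ref{thm:bai} and simplifying gives an upper bound on $\Tr(A^{-1})$; dividing by $k$ yields $\widetilde a_{ii} \le \frac{\lambda+1}{\lambda(\lambda+3)}$. This is the step most likely to require care: the $2\times 2$ inverse and the ensuing algebra must be pushed through and shown to collapse to exactly $\frac{\lambda+1}{\lambda(\lambda+3)}$ (equivalently, one checks that the product $[\,m_1\ \ k\,]\!\begin{bmatrix} m_2 & m_1\\ \lambda^2 & \lambda\end{bmatrix}^{-1}\![\,k\ \ 1\,]^T$ equals $\frac{k(\lambda+1)}{\lambda(\lambda+3)}$ after substituting the formulas for $m_1,m_2$).

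For the off-diagonal bound, I would apply Theorem~\ref{thm:diagdom} to the strictly diagonally dominant matrix $A$. Fix $i$ and $j \ne i$; the bound gives $|\widetilde a_{ji}| \le \max_{l \ne i}\bigl\{ |a_{li}| / (|a_{ll}| - \sum_{k \ne l,i} |a_{lk}|)\bigr\}\,|\widetilde a_{ii}|$. The only indices $l \ne i$ with $a_{li} \ne 0$ are the two cyclic neighbours of $i$; for such an $l$ we have $|a_{li}| = 1$, $|a_{ll}| = 2 + \lambda$, and $\sum_{k \ne l,i}|a_{lk}| = 1$ (the single remaining off-diagonal entry of row $l$, i.e. the other neighbour of $l$). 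Hence the ratio equals $\frac{1}{(2+\lambda) - 1} = \frac{1}{\lambda+1}$, and for $l$ not a neighbour of $i$ the numerator is $0$; so the maximum is $\frac{1}{\lambda+1}$, giving $|\widetilde a_{ji}| \le \frac{1}{\lambda+1}|\widetilde a_{ii}|$. Since $\widetilde A$ is symmetric this is the same as $|\widetilde a_{ij}| \le \frac{1}{\lambda+1}|\widetilde a_{ii}|$ for all $j \ne i$, completing the proof. (A small caveat to address: Theorem~\ref{thm:diagdom} is stated for general strictly diagonally dominant matrices, and one should note $k \ge 3$ so that $C_k$ is a genuine cycle and each row of $Q(C_k)$ indeed has two distinct off-diagonal $1$'s; for $k=3$ the two neighbours of $l$ other than $l$ itself are $i$ and one more vertex, so the count $\sum_{k\ne l,i}|a_{lk}| = 1$ still holds.)
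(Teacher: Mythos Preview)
Your proposal is correct and follows exactly the paper's approach: the diagonal bound comes from the Bai--Golub upper bound (Theorem~\ref{thm:bai}) with $a=\lambda$, $m_1=k(2+\lambda)$, $m_2=k(2+\lambda)^2+2k$, after which the algebra collapses to $\frac{k(\lambda+1)}{\lambda(\lambda+3)}$ as you indicate; the off-diagonal bound comes from Theorem~\ref{thm:diagdom} via the index-chase you carried out. You have in fact supplied the details the paper omits under ``doing some algebra'' and ``careful index-chasing.''
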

\begin{proof}
The first inequality follows from Theorem \ref{thm:bai}, upon taking $a=\lambda,m_{1}=k(2+\lambda),m_{2}=k(2+\lambda)^{2}+2k$ and doing some algebra. The second inequality follows from Theorem \ref{thm:diagdom} quite easily, upon careful index-chasing.
\end{proof}

\section{The graphs $H=\overline{K}_{s} \vee G$ for $s<t$, Part 
II}\label{sec:cycle}

In Section \ref{sec:kst} we studied the situation when $G$ is relatively dense. Now we turn to investigate the case when $G$ is sparse. It will be seen that the matrix $R_{\mu}$ provides a handier tool than $Q_{\mu}$ in this case, because Theorem \ref{thm:rmu} requires us to check a simpler property than Theorem \ref{thm:qmu} - provided that the inverse $R_{\mu}^{-1}$ is sufficiently understood. Fortunately, if $G$ is sparse then the structure of $R_{\mu}^{-1}$ can often be inferred from that of $R_{\mu}$.

The main result of this section is:
\begin{thm}\label{thm:deg2}
Let $t>s \geq 6$ and let $H=\overline{K}_{s} \vee G$. If $\Delta(G) \leq 2$, then $H$ is $S$-Roth.
\end{thm}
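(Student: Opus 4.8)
The strategy is to use Theorem~\ref{thm:rmu}: since $s \geq 6$ we will have $s > \mu(H)$ (because $\mu(H) < \delta(H) \leq$ the degree $t$ of an $S$-vertex is not quite what we want, but in fact $\mu(H) < \Delta(G)+s \leq s+2$ is also not immediate; rather, we argue $\mu(H) < \delta(G_{H,T})+s \le s+2$ is false in general, so instead bound $\mu$ directly). More carefully, the first step is to pin down $R_\mu = Q(G) + (s-\mu)I$ and show it is positive definite: since $\Delta(G) \le 2$, each component of $G$ is a path or a cycle, so $Q(G)$ has smallest eigenvalue $\mu(G) \ge 0$, whence $R_\mu$ is positive definite as soon as $s > \mu(H)$. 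To get $s > \mu(H)$, use Lemma~\ref{lem:das}: $\mu(H) < \delta(H)$; but $\delta(H) = \min(t,\, s+\delta(G))$, and since $t > s$ this gives $\mu(H) < s + \delta(G) \le s+2$, which is not quite $<s$. So instead I would bound $\mu(H)$ above via Corollary~\ref{cor:mu4} or Theorem~\ref{thm:cf} with the indicator vector of the bipartition of $K_{s,t}$: that vector gives $\mu(H) \le \frac{4e(G)}{s+t}$ where $e(G) \le t$ (as $\Delta(G)\le 2$), so $\mu(H) \le \frac{4t}{s+t} < 4 \le s$ when $s \ge 4$. Hence $R_\mu$ is positive definite, and Theorem~\ref{thm:rmu} applies: $H$ is $S$-Roth iff all row sums of $R_\mu^{-1}$ are positive.

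The second and main step is to prove that every row sum of $R_\mu^{-1}$ is positive. Write $R_\mu = \bigoplus_i (Q(\Gamma_i) + \lambda I)$ where the $\Gamma_i$ are the components of $G$ (paths or cycles) and $\lambda = s - \mu > s - \frac{4t}{s+t}$. For a fixed component on $k$ vertices, the row sum of the inverse of $Q(\Gamma_i)+\lambda I$ in row $p$ is $\widetilde a_{pp} + \sum_{q \ne p} \widetilde a_{pq}$. Using the diagonal-dominance bound (Theorem~\ref{thm:diagdom}, or Lemma~\ref{lem:bai} in the cycle case) we have $|\widetilde a_{pq}| \le \frac{1}{\lambda+1}|\widetilde a_{pp}|$ for each off-diagonal entry; but a single vertex $p$ of a path or cycle has at most $2$ off-diagonal entries in its row that are ``large'', and in fact the decay is geometric: $|\widetilde a_{pq}| \le \frac{1}{(\lambda+1)^{\operatorname{dist}(p,q)}}|\widetilde a_{pp}|$ (this follows by iterating the argument of Theorem~\ref{thm:diagdom} along the path, or directly from the banded structure). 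Summing the geometric series, the total off-diagonal mass in row $p$ is at most $2\sum_{d\ge 1}(\lambda+1)^{-d}\,|\widetilde a_{pp}| = \frac{2}{\lambda}|\widetilde a_{pp}|$. Since $\lambda = s-\mu > 2$ once $s \ge 6$ (as $\mu < 4t/(s+t) < 4$, so $\lambda > s-4 \ge 2$), we get $\frac{2}{\lambda} < 1$, so the row sum is at least $(1 - \tfrac2\lambda)\widetilde a_{pp} > 0$. Here I use that $\widetilde a_{pp} > 0$, which holds because $R_\mu$ is positive definite (diagonal entries of the inverse of a PD matrix are positive).

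The third step is merely bookkeeping: assemble the per-component estimates. Since $R_\mu$ is block-diagonal, a row of $R_\mu^{-1}$ lies entirely within one block, so positivity of all row sums of each block's inverse gives positivity of all row sums of $R_\mu^{-1}$. Then Theorem~\ref{thm:rmu} yields that $H$ is $S$-Roth.

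\textbf{Main obstacle.} The delicate point is the interplay of the two inequalities $\mu < 4t/(s+t) < 4$ and $\lambda = s - \mu > 2$: one needs $s \ge 6$ precisely so that $s - 4 \ge 2$, which is why the hypothesis $s \ge 6$ appears. I expect the genuine technical work to be in justifying the geometric decay $|\widetilde a_{pq}| \le (\lambda+1)^{-\operatorname{dist}(p,q)}|\widetilde a_{pp}|$ rigorously for both the path and the cycle cases — Theorem~\ref{thm:diagdom} as stated gives only the one-step bound, so one either iterates it carefully (tracking which indices are excluded at each step) or invokes a known banded-inverse decay estimate; the cycle case also requires noting that distances are measured around the cycle and that there are two ``directions'' contributing, which is already baked into the factor $2$ above and into Lemma~\ref{lem:bai}. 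A secondary subtlety is confirming $R_\mu$ is positive definite when some component of $G$ is an even cycle (where $\mu(G)=0$), so that $R_\mu = Q(G) + \lambda I$ has smallest eigenvalue exactly $\lambda > 0$ — this is fine as long as $\lambda > 0$, which we have.
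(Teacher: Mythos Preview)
Your setup coincides with the paper's: bound $\mu(H) \le 4t/(s+t) < 4$ via Corollary~\ref{cor:mu4}, deduce $\lambda := s-\mu > s-4 \ge 2$ so that $R_\mu = Q(G)+\lambda I$ is positive definite, invoke Theorem~\ref{thm:rmu}, and exploit the block-diagonal splitting of $R_\mu$ over the components of $G$ (cycles, paths, isolated vertices). The divergence is in how the per-block row sums of the inverse are handled.

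The paper does \emph{not} use a geometric-decay argument. For a cycle block $A_i = Q(C_k)+\lambda I$ it simply observes that $A_i\mathbf{1} = (\lambda+4)\mathbf{1}$, so $A_i^{-1}\mathbf{1} = (\lambda+4)^{-1}\mathbf{1}>0$; no entrywise estimates are needed at all. For a path block $B_j$ it writes $B_j = A - (e_1+e_k)(e_1+e_k)^T$ as a rank-one perturbation of the corresponding cycle matrix $A$, applies the Sherman--Morrison formula, and then controls the resulting correction term using only the one-step bound of Lemma~\ref{lem:bai} (i.e.\ $d<0.3$ and $|\widetilde a_{ij}|\le d/(\lambda+1)<0.1$ when $\lambda>2$). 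So the paper never needs anything beyond the single inequality $|\widetilde a_{ji}|\le(\lambda+1)^{-1}\widetilde a_{ii}$ that Theorem~\ref{thm:diagdom} already gives.

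Your route is more uniform but its linchpin --- the decay $|\widetilde a_{pq}|\le(\lambda+1)^{-\operatorname{dist}(p,q)}\widetilde a_{pp}$ --- is \emph{not} obtained by ``iterating Theorem~\ref{thm:diagdom}''. That theorem yields only a flat one-step bound for every off-diagonal entry; there is no built-in recursion in the index, and naively deleting rows/columns changes the matrix. Geometric decay of this type \emph{is} true for strictly diagonally dominant tridiagonal (and circulant banded) matrices, but you would have to import or prove a separate result (e.g.\ via the comparison-matrix inequality $|B^{-1}|\le M(B)^{-1}$ for H-matrices together with an explicit Green's-matrix computation for the M-matrix $L(P_k)+\lambda I$, or via Nabben-type decay bounds). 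Once that lemma is in hand your argument goes through, and it has the virtue of treating paths and cycles on an equal footing; the paper's Sherman--Morrison trick, by contrast, stays entirely within the tools developed in Sections~\ref{sec:q_alt}--\ref{sec:more} and avoids any appeal to exponential-decay theory.
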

\begin{proof}
Obviously, $G$ is a disjoint union of cycles, paths and isolated vertices. Therefore it has at most $t$ edges. By Corollary \ref{cor:mu4}, we see that $\mu(H) \leq \frac{4t}{t+s}<4$. Thus
we may use Theorem \ref{thm:rmu}. The matrix $R_{\mu}$ is block-diagonal, each block corresponding to a connected component of $G$. We can write $$R_{\mu}=A_{1} \oplus \ldots \oplus A_{k} \oplus B_{1} \oplus \ldots \oplus B_{m} \oplus (s-\mu)I,$$ with the $A_{i}$s corresponding to the cycles, the $B_{j}$s to the paths and the last summand lumping together all isolated vertices, if there are any.
Clearly:
$$R_{\mu}^{-1}=A_{1}^{-1} \oplus \ldots \oplus A_{k}^{-1} \oplus B_{1}^{-1} \oplus \ldots \oplus B_{m}^{-1} \oplus (s-\mu)^{-1}I,$$
and so we need to show that the row sums of each $A_{i}^{-1}$ and each $B_{j}^{-1}$ are positive. 

The row sums of $A_{i}$ are all equal to $4+s-\mu$ and therefore the row sums of $A_{i}^{-1}$ are all equal to $(4+s-\mu)^{-1}$ and thus are positive. Denote for further use $\beta=(4+s-\mu)^{-1}$.

Finally, let us consider a $B_{j}$. It corresponds to a path component of $G$ on, say, $k$ vertices. Had this component been a cycle its matrix $A$ would have had positive row sums, by the preceding argument.
We shall want to write $B_{j}$ as a rank-one modification of $A$ and to show that the row sums remain positive:
$$
B_{j}=A-E, \quad E=(e_{1}+e_{k})(e_{1}+e_{k})^{T}.
$$

The Sherman-Morrison formula (cf. \cite{HenSea81}) then shows that:
$$
B_{j}^{-1}=A^{-1}+\frac{A^{-1}EA^{-1}}{1-(e_{1}+e_{k})^{T}A^{-1}(e_{1}+e_{k})}.
$$
Now, as $A=Q(C_{k})+(s-\mu)I$, we can bring into play the observations made in Section \ref{sec:more}. We shall write $\widetilde{A}=A^{-1}$ and denote by $d$ the common value of the diagonal entries of $\widetilde{A}$.

The expression $(e_{1}+e_{k})^{T}A^{-1}(e_{1}+e_{k})$ is equal to the sum of the four corner entries of $A^{-1}$, \emph{i.e.} to $2d+2\widetilde{a}_{1k}$. Therefore:
\begin{equation}\label{eq:pc}
B_{j}^{-1}=\widetilde{A}+\frac{\widetilde{A}E\widetilde{A}}{1-2(d+\widetilde{a}_{1k})}.
\end{equation}

Taking the row sums of both sides of \eqref{eq:pc} and bearing in mind that $\widetilde{A}\mathbf{1}=\beta \mathbf{1}$, we obtain that the vector of row sums of $\widetilde{A}E\widetilde{A}$ is equal to $2\beta(\widetilde{A}e_{1}+\widetilde{A}e_{k})$. Therefore:
\begin{equation}\label{eq:ri1}
r_{i}(B^{-1}_{j})=
\beta \left(1 + \frac{2(d+\widetilde{a}_{1k})}{1-2(d+\widetilde{a}_{1k})} \right), i \in \{1,k\}  
\end{equation}
and
\begin{equation}\label{eq:ri2}
r_{i}(B^{-1}_{j})=\beta \left(1 + \frac{2(\widetilde{a}_{i1}+\widetilde{a}_{ik})}{1-2(d+\widetilde{a}_{1k})} \right), 2 \leq i \leq k-1. 
\end{equation}

Finally, we use Lemma \ref{lem:bai} to estimate $d,\widetilde{a}_{1k},\widetilde{a}_{i1},\widetilde{a}_{ik}$: since $\lambda=s-\mu>2$, we have that $d<0.3$ and $|\widetilde{a}_{1k}|,|\widetilde{a}_{i1}|,|\widetilde{a}_{ik}| <0.1$. Furthermore, a simple determinantal calculation shows that $\widetilde{a}_{1t}<0$. This implies that $$\frac{2(d+\widetilde{a}_{1k})}{1-2(d+\widetilde{a}_{1k})}>0, \left|\frac{2(\widetilde{a}_{i1}+\widetilde{a}_{ik})}{1-2(d+\widetilde{a}_{1k})}\right| <\frac{0.4}{0.4}=1$$ and therefore $r_{i}(B_{j}^{-1})$ is always positive.
\end{proof}

\begin{rmrk}\label{rem:path}
The conclusion of Theorem \ref{thm:deg2} remains true for $s=5$ as well but the proof we gave will not go through. The difficulty is posed by the components of $G$ that are paths  and it can be handled by a different argument that uses the fact that $B_{j}$ is then tridiagonal and that therefore $B_{j}^{-1}$ is a Green's matrix (cf. \cite{Meu92}); we omit here the details, which  are somewhat tedious.

On the other hand, $S$-Rothness may fail altogether when $s=4$. Indeed, it can be easily verified that, say, $\overline{K}_{4} \vee P_{60}$ is not $S$-Roth.
\end{rmrk}

\section{Open problems and remarks}\label{sec:open}
\begin{enumerate}
\item
An attractive informal way of describing Corollary \ref{cor:st} is: when $B_{H,S}$ is the complete bipartite graph, you can create any graph $G$ you like on the vertices in $T$ and still be sure that $H$ will be $S$-Roth. 

It is then natural to ask whether this property holds when $B_{H,S}$ is not quite complete bipartite but almost so. For example, suppose that $B_{H,S}$ is $K_{s,t}$ minus one edge (say, the edge $k_{0}i_{0}$ for some $k_{0} \in S,i_{0} \in T$). We can apply Theorem \ref{thm:harmcond} to show that in this situation $H$ will be $S$-Roth irrespective of $G$ wherever $s \geq t+1$. 

To see this, observe that $$\sum_{k \in N_{ii_{0}}}{\frac{1}{d_{B}(k)}}=\frac{s-1}{t}$$ and that $$\sum_{k \in N_{ij}}{\frac{1}{d_{B}(k)}}=\frac{s-1}{t}+\frac{1}{t-1}, \quad i_{0} \notin \{i,j\}.$$

However, empirical evidence (we tested $s=t=6$ and $s=t=8$) suggests that $H$ may be $S$-Roth even for $s=t$. Therefore, we can pose the following problem:


\begin{prob_s}
Describe the dense bipartite graphs $B$ with biparititions of size $s,t (s \geq t)$ such that every graph $H$ with $B_{H,S}=B$ is $S$-Roth, irrespective of the structure of $G_{H,T}$. Such graphs can be called \emph{Ultra-Roth}.
\end{prob_s}

Corollary \ref{cor:st} asserts that $K_{s,t}$ is Ultra-Roth wherever $s \geq t$ and we have seen before that $K_{s,t}-e$ is Ultra-Roth wherever $s \geq t+1$. 

\item
We offer a conjecture that is inspired by Theorem \ref{thm:deg2} and supported by extensive empirical evidence:

\begin{conj_s}\label{conj:tree}
Let $t>s \geq 6$ and let $H=\overline{K}_{s} \vee G$. If $G$ is a tree and if $\Delta(G) \leq s$, then $H$ is $S$-Roth.
\end{conj_s}

It might be possible to attack Conjecture \ref{conj:tree} using the methods of Section \ref{sec:q_alt}, in a way that was hinted at in Remark \ref{rem:path}. The representation of the inverse of a tridiagonal matrix as a Green's matrix has known generalizations to the case of a tree \cite{Nab01} and therefore $R_{\mu}^{-1}$ is, in principle, amenable to analysis. However, such an approach has required great delicacy even in the case of a path and the case of a general tree is likely to present greater technical difficulties in the analysis of the inverse. 

\item
An even bolder conjecture has been verified by us for values of $t$ up to $8$ (using again data from \cite{RoyleBipData}):
\begin{conj_s}\label{conj:s}
Let $t>s \geq 6$ and let $H=\overline{K}_{s} \vee G$. If $\Delta(G)<s$, then $H$ is $S$-Roth.
\end{conj_s}
At the moment we cannot suggest a possible approach to Conjecture \ref{conj:s}. 

\item
As we have seen in Section \ref{sec:case}, for all its combinatorial elegance Theorem \ref{thm:harmcond} covers only a relatively small part of the situations where $S$-Rothness arises. It might be possible to obtain another theorem of this kind by using a little-known result of Gavrilov:

\begin{thm}\cite{Gav01}
Let $M \in \mathbb{R}^{n \times n}$ be a symmetric positive definite matrix. If for some $2 \leq m <n$ all principal submatrices of order $m$ of $M$ are monotone, then $M$ is monotone.
\end{thm}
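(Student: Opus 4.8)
The plan is to reduce the statement to the following one-step claim and then bootstrap: \emph{if $N$ is a symmetric positive definite matrix of order $p\ge 3$ all of whose principal submatrices of order $p-1$ are monotone, then $N$ itself is monotone.} Granting this, the theorem follows quickly. Every principal submatrix of a symmetric positive definite matrix is again symmetric positive definite, so one argues by induction on $p=m,m+1,\dots,n$ that every principal submatrix of $M$ of order $p$ is monotone: the base case $p=m$ is the hypothesis, and the inductive step applies the one-step claim to each principal submatrix of $M$ of order $p+1$ (its order-$p$ principal submatrices being monotone by the inductive hypothesis, and $p+1\ge m+1\ge 3$). Taking $p=n$ yields that $M$ is monotone.

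For the one-step claim, recall that a square matrix is monotone exactly when it is invertible with entrywise nonnegative inverse. Set $C=N^{-1}$, a symmetric positive definite matrix, and for an index $r$ let $N^{(r)}$ denote the principal submatrix of $N$ obtained by deleting row and column $r$. By the block-inverse (Schur complement) formula, $(N^{(r)})^{-1}$ is the Schur complement of $C$ with respect to its diagonal entry $C_{rr}$, that is, the matrix with $(i,j)$ entry $C_{ij}-C_{ir}C_{rj}/C_{rr}$ for $i,j\ne r$. Hence the hypothesis ``each $N^{(r)}$ is monotone'' becomes: for every index $r$ and all $i,j\ne r$,
$$C_{ij}\ \ge\ \frac{C_{ir}C_{rj}}{C_{rr}},$$
and it remains to show $C_{ij}\ge 0$ for all $i\ne j$ (the diagonal entries of $C$ being positive since $C$ is positive definite).

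The crux is to rule out a negative off-diagonal entry. Suppose $C_{i_0j_0}<0$ for some $i_0\ne j_0$, and (using $p\ge 3$) fix an index $r\notin\{i_0,j_0\}$. Applying the displayed inequality with pivot $r$ to the pair $(i_0,j_0)$ forces $C_{i_0r}C_{rj_0}<0$, so these two entries have opposite signs; say $b:=C_{i_0r}>0>C_{rj_0}=:c$, and the same inequality then reads $|C_{i_0j_0}|\le b|c|/C_{rr}$. Next, applying the inequality with pivot $i_0$ to the pair $(r,j_0)$ gives $C_{rj_0}\ge b\,C_{i_0j_0}/C_{i_0i_0}$, whose right-hand side is negative, so $|c|\le b\,|C_{i_0j_0}|/C_{i_0i_0}$. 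Chaining these two bounds,
$$|C_{i_0j_0}|\ \le\ \frac{b\,|c|}{C_{rr}}\ \le\ \frac{b^{2}\,|C_{i_0j_0}|}{C_{i_0i_0}\,C_{rr}},$$
and dividing by $|C_{i_0j_0}|>0$ gives $C_{i_0r}^{2}\ge C_{i_0i_0}C_{rr}$, which contradicts positive definiteness of $C$ (the order-$2$ principal minor on $\{i_0,r\}$ would be nonpositive). Therefore $C=N^{-1}\ge 0$ and $N$ is monotone, completing the one-step claim.

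The main obstacle, and essentially the only nontrivial point, is the crux step: once the hypothesis is rephrased as a family of Schur-complement inequalities on $C=N^{-1}$, the key observation is that the ``dangerous'' sign pattern in which $C_{i_0j_0}$ is negative while $C_{i_0r},C_{rj_0}$ are of opposite sign is precisely the configuration that gets excluded by invoking the monotonicity hypothesis a second time with the pivot $i_0$ (or symmetrically $j_0$) rather than $r$, after which a single order-$2$ positive-definiteness inequality closes the argument. Both the bootstrap on the order and the translation into Schur complements are routine.
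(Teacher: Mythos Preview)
The paper does not actually prove this theorem; it merely quotes it from \cite{Gav01} in the open-problems section as a possible tool for future work. So there is no ``paper's own proof'' to compare against.

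That said, your argument is correct and self-contained. The reduction to the one-step claim via induction on the order is clean, and the one-step claim is handled properly: the identity $(N^{(r)})^{-1}=C^{(r)}-\frac{1}{C_{rr}}C_{\cdot r}C_{r\cdot}$ (Schur complement of $C=N^{-1}$ at the $r$th diagonal entry) is exactly the right translation, and the two applications of the resulting inequality---first with pivot $r$, then with pivot $i_0$---combine to give $C_{i_0r}^2\ge C_{i_0i_0}C_{rr}$, contradicting the positivity of the $2\times 2$ principal minor of $C$ on $\{i_0,r\}$. The WLOG on the sign pattern of $(b,c)$ is harmless by symmetry, and the chain of inequalities is checked line by line without gaps. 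I have nothing to add.
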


This theorem generalizes the well-known fact (cf. \cite[pp. 134-138]{Avi}) that $M$-matrices are monotone, since for a positive definite matrix the nonpositivity of offdiagonal entries is equivalent to the monotonicity of principal submatrices of order $2$.

Since $Q_{\mu}$ is positive definite, it might be possible to obtain a counterpart to our Theorem \ref{thm:harmcond} by formulating a combinatorial condition that ensures the monotonicity of the principal submatrices of order $3$ of $Q_{\mu}$.

\end{enumerate}

\section*{Acknowledgments}
We are grateful to the anonymous referee for very useful comments.

\bibliographystyle{abbrv}
\bibliography{nuim}

\end{document}